\newcommand{\R}{\mathbb{R}}
\newcommand{\Z}{\mathbb{Z}}
\newcommand{\E}{\mathbb{E}}
\newcommand{\Prob}{\mathbb{P}}
\newcommand{\Kdim}{{d}}
\newcommand{\I}[1]{\mathbbm{1}{\left( #1 \right)}} 
\newcommand{\T}{\prime} 
\newcommand{\up}[1]{{(#1)}}
\newcommand{\cc}{r} 
\newcommand{\uu}{{\up{\cc}}} 
\newcommand{\fk}{{\up{\alpha}}} 
\newcommand{\inner}[1]{\langle #1 \rangle}
\newcommand{\Linner}[1]{\left\langle #1 \right\rangle}
\def\caP{\mathcal{P}}
\def\caM{\mathcal{M}}
\def\caS{\mathcal{S}}
\providecommand{\abs}[1]{\left\lvert#1\right\rvert}
\newtheorem{corollary}{Corollary}
\newtheorem{theorem}{Theorem}
\theoremstyle{definition}
\numberwithin{equation}{section}
\newtheorem{remark}{Remark}
\newtheorem{lemma}{Lemma}
\newtheorem{assumption}{Assumption}
\newtheorem{condition}{Condition}
\definecolor{processblue}{cmyk}{0.96,0,0,0}
\tikzset{
	server/.style={circle, inner sep=0.0mm, minimum width=.8cm,
		draw=green,fill=green!10,thick},
	ellipseserver/.style={ellipse, inner sep=0.0mm, minimum width=.8cm,
		draw=green,fill=green!10,thick},
	buffer/.style={rectangle, rounded corners=3pt,
		inner sep=0.0mm, minimum width=.9cm, minimum height=.6cm,
		draw=orange,fill=blue!10,thick},
	vbuffer/.style={rectangle,rounded corners=3pt,
		inner sep=0.0mm,  minimum width=.6cm, minimum height=.8cm,
		draw=orange,fill=blue!10,thick},
	routing/.style={circle,inner sep=0pt,minimum size=.5cm,
		draw=red,minimum width=.1cm, fill=red!30},
	state/.style={inner sep=1.0mm, rounded corners=2pt,
		draw=green,fill=green!10,thick},
	dot/.style={circle, inner sep=0.0mm, minimum width=.6cm,
		draw=blue,fill=blue!10,thick},
	task/.style={circle, inner sep=0.0mm, minimum width=.6cm,
		draw=blue,fill=blue!10,thick},
	data/.style={rectangle, inner sep=0.0mm, minimum width=.5cm,
		minimum height=.4cm,
		draw=red,fill=red!10,thick}
}
\tikzset{
	pics/openrectangle/.style n args={3}{
		code = { %
			\pgfmathsetmacro\x{.5}
			\pgfmathsetmacro\y{4}
			\pgfmathsetmacro\z{.55}
			\coordinate (SN) at #1;
			\coordinate (SS) at ($(0,-.3)+#1$);
			\draw[thick,red] ([shift={(-\x,\y)}]SN)--([shift={(-\x, \z)}]SN)
			--([shift={(\x,\z)}]SN)--([shift={(\x,\y)}]SN);
			
			\foreach \i/\t in #2
			{
				\node[task] (\t) at ([shift={(0,\i)}]SN)  {\t};
			}
			
			\foreach \i/\d in #3
			{
				\node[data] (\d) at ([shift={(0,-\i)}]SS)  {C\d};
			}
		}
	}
}
\tikzset{
	pics/rack/.style n args={3}{
		code = { %
			\pgfmathsetmacro\x{1.7}
			\pgfmathsetmacro\y{4}
			\pgfmathsetmacro\z{-2}
			\pgfmathsetmacro\zz{-2.5}
			\coordinate (S) at ($#1!.5!#2$);
			\draw[thick,blue] ([shift={(-\x,\y)}]S)--([shift={(-\x, \z)}]S)
			--([shift={(\x,\z)}]S)--([shift={(\x,\y)}]S);
			\node at ([shift={(0,\zz)}]S) {Rack #3};
		}
	}
}
\title{Asymptotic Product-form Steady-state Distribution for Semimartingale Reflecting Brownian Motion in Multi-scaling Regime}
\author{Jin Guang$^1$ \and Xinyun Chen$^1$
\and J. G. Dai$^2$ \and Peter W. Glynn$^3$}
\date{{$^1$The Chinese University of Hong Kong, Shenzhen\\%
$^2$Cornell University}\\
$^3$Stanford University} 
\begin{document}




\maketitle

\begin{abstract}
	Inspired by \cite{DaiGlynXu2023}, we develop a novel multi-scaling asymptotic regime for semimartingale reflecting Brownian motion (SRBM). In this regime, we establish the steady-state convergence of SRBM to a product-form limit with exponentially distributed components by assuming the $\mathcal{P}$-reflection matrix and a uniform moment bound condition. We further demonstrate that the uniform moment bound condition holds in several subclasses of $\caP$-matrices. Our proof approach is rooted in the basic adjoint relationship (BAR) for SRBM proposed by \cite{HarrWill1987}.
\end{abstract}

\section{Introduction}

As a more parsimonious model, semimartingale reflecting Brownian motions (SRBMs) on the positive orthant are used to approximate the workload process for a wide range of stochastic networks in heavy traffic, including generalized Jackson network \citep{Reim1984} and multi-class queueing networks under various service principles \citep{HarrNguy1993, Bram1998a, Will1998a, ChenShenYao2002}. In particular, the steady-state distribution of SRBM provides a good approximation of the long-term performance of complex stochastic networks; see, for example, \citet{GamaZeev2006, BudhLee2009, Kats2010, ZhanZwar2008}.
However, computing the stationary distribution of the SRBM is challenging, except for a few special cases such as \citet{Fodd1983,HarrWill1987a,DiekMori2009}.
This difficulty has spurred the literature to develop numerical and simulation methods to compute the stationary distribution of SRBMs; see \citet{DaiHarr1992}, \cite{ShenChenDaiDai2002}, \citet{BlanChen2015}, \citet{BlanChenSiGlyn2021} and references therein.

In this paper, we take an alternative approach and establish an asymptotic approximation for the stationary distribution of SRBMs. The asymptotic regime we use is called the \textit{multi-scaling} regime and is inspired by \citet{DaiGlynXu2023}. Specifically, we consider a family of $d$-dimensional SRBM $\{Z^\uu(t), t\geq 0\}$ indexed by $r \in (0, 1)$ with data $(\Gamma, \mu^\uu, R)$, which are the covariance matrix, drift vector and reflection matrix of the $r$th SRBM, respectively. We denote by $\delta^\uu \equiv -R^{-1}\mu^\uu$ as the \textit{traffic slackness} vector of the $r$th SRBM. Under the multi-scaling regime, 
$$\delta^\uu_i=r^i, \quad \text{for }i=1,\ldots,d,$$
indicating that each dimension of the SRBM is scaled to ``heavy traffic" at different speeds. Under this multi-scaling regime, we prove that the stationary distribution of the scaled SRBM converges to a product form of independent exponential random variables for a variety of SRBM models, i.e.,
\begin{align*}
		\left(\cc Z^{\uu}_1(\infty), \cc^2 Z^{\uu}_2(\infty),... ,  \cc^\Kdim Z^\uu_\Kdim(\infty)\right) \Rightarrow  (Z_1^*, \ldots, Z_\Kdim^*), \quad \text{as } \cc \to 0.
	\end{align*}
In addition, the mean of each $Z_k^*$ can be explicitly computed from data $\Gamma$ and $R$.

Our proof approach is rooted in the basic adjoint relationship (BAR) of SRBM proposed by \citet{HarrWill1987}. Specifically, BAR characterizes the stationary distribution of SRBM via a family of partial differential equations (PDEs), along with oblique derivative boundary conditions corresponding to each test function. In \citet{DaiHarr1992} and \cite{ShenChenDaiDai2002}, BAR serves as the key component in developing numerical methods for computing the stationary distribution of SRBM. In our setting, we use BAR to establish the convergence of the moment generating function (MGF) of the stationary distribution via a proper choice of test functions. 

The MGF convergence critically depends on a uniform moment bound condition. For an SRBM with $\caM$-reflection matrix, we manage to verify the uniform moment bound condition via mathematical induction built upon BAR with a class of carefully designed test functions. For an SRBM with a general $\caP$-reflection matrix, we show that the convergence of the MGF still holds, assuming the uniform moment bound condition. We then verify the uniform moment bound condition for two special examples: (1) SRBMs in two dimensions and (2) SRBMs with lower triangular reflection matrices. Whether such a uniform moment bound holds for general $\mathcal{P}$-matrices remains an open problem (see also \citet{DaiHuo2024} for the multi-class queueing network with SBP service discipline).

The rest of the paper is organized as follows: In Section \ref{sec: main results}, we present the main results in Theorem~\ref{thm: matrix P}. In Section \ref{sec: validation}, we validate the asymptotic results via the numerical experiments. The proof of Theorem \ref{thm: matrix P} is presented in Section \ref{sec: proof main}. In Section \ref{sec: uniform moment bound}, we present the proof of the uniform moment bounds for several subclasses of $\mathcal{P}$-reflection matrices, including the $\caM$-matrix, the two-dimensional case, and the lower triangular $\caP$-matrix.

\section{Model Setting and Main Results} \label{sec: main results}

\subsection{Model Setting and Assumptions}\label{subsec: assumption}

\noindent In this section, we present the definition of a semimartingale reflecting Brownian motion (SRBM) on the $d$-dimensional orthant $\R_+^d\equiv \{x\in \R^d:x\geq 0\}$, where $d$ is a positive integer. Let $\mu$ be a $d$-dimensional vector, $\Gamma$ be a $d\times d$ symmetric and positive definite matrix, and $R$ be a $d\times d$ matrix. 
An SRBM associated with $(\mu, \Gamma, R)$ is a continuous $\mathbb{F}$-adapted $d$-dimensional process $Z=\{Z(t),t\geq 0\}$, together with a family of probability measures $\{\Prob_x:x\in \R_+^d\}$, defined on some filtered probability space $(\Omega, \mathscr{F}, \mathbb{F}, \Prob_x)$, if it satisfies that under $\Prob_x$,
\begin{equation*}
    Z(t) = X(t) + R Y(t) \in \R_+^d, \quad \text{for all } t\geq 0,
\end{equation*}
where $X=\{X(t),t\geq 0\}$ is a $d$-dimensional $\mathbb{F}$-Brownian motion with drift vector $\mu$, covariance matrix $\Gamma$ and initial condition $X(0)=x$ $\Prob_x$-almost surely, and $Y=\{Y(t),t\geq 0\}$ is an $\mathbb{F}$-adapted, $d$-dimensional process such that for each $i=1,\ldots,d$, the $i$th component $Y_i$ of $Y$ satisfies (i) $Y_i$ is continuous and nondecreasing with $Y_i(0)=0$, and (ii) $Y_i$ can increase only when $Z$ is on the face $F_i\equiv \{x\in \R^d_+:x_i=0\}$, i.e., $\int_0^t Z_i(s)dY_i(s)= 0$ for all $t\geq 0$. Hence, we have defined the so-called weak formulation of an SRBM associated with $(\mu, \Gamma, R)$ that starts from $x$.

\citet{ReimWill1988} and \citet{TaylWill1993} demonstrated that the necessary and sufficient conditions for the existence and uniqueness of $Z$ in law under $\Prob_x$ for each $x\in\R_+^d$ are that the reflection matrix $R$ is a completely-$\caS$ matrix. A square matrix $R$ is defined as an $\caS$-matrix if there exists a vector $u\geq 0$ such that $Ru>0$, and $R$ is completely-$\caS$ if every principal submatrix of $R$ is an $\caS$-matrix.

Roughly speaking, $Z$ behaves like a Brownian motion with initial state $x$, drift vector $\mu$ and covariance matrix $\Gamma$, and it is ``reflected'' or ``pushed'' instantaneously in the direction of the $i$th column of the reflection matrix $R$, denoted $R_{:,i}$, whenever it hits the boundary surface $F_i\equiv \{z\in \R_+^d:z_i=0\}$. The process $Y_i$ can be interpreted as the amount of time $Z_i$ spends on $F_i$, and is often called the \textit{regulator process} or \textit{pushing process}.

To ensure the positive recurrence of the SRBM, the drift vector $\mu$ and reflection matrix $R$ must satisfy the following necessary conditions in \citet{BramDaiHarr2010}:
\begin{equation} \label{eq1:necessary}
	\text{$R$ is nonsingular $\quad$ and  $\quad$  $\delta\equiv-R^{-1}\mu>0$}.
\end{equation} 
The vector $\delta$ can be understood as the proportion of time that the SRBM spends at the boundaries in the long run \citep{DaiDiek2011}, and we call it the \textit{traffic slackness} vector.

\vspace{5pt}

Inspired by the multi-scale heavy traffic regime studied in \citet{DaiGlynXu2023}, we propose a multi-scaling regime for SRBMs. Specifically, we consider a sequence of $d$-dimensional SRBMs $\{Z^\uu(t), t \geq 0\}$ indexed by $r\in (0,1)$. 
For each $r\in (0,1)$, the $r$th SRBM  has data $(\Gamma, \mu^\uu, R)$, indicating that the SRBMs share the same covariance and reflection matrices. Under the multi-scaling regime, we scale the traffic slackness vector as follows:

\begin{assumption}[Multi-scaling Regime] \label{assmpt: multiscale}
	For all $r\in (0,1)$,
	\begin{equation*}
		\delta^\uu \equiv -R^{-1}\mu^\uu=(r,~r^2,~\ldots,~r^d).
	\end{equation*}
\end{assumption}
Under Assumption \ref{assmpt: multiscale}, the traffic slackness in each dimension approaches 0 at different rates, which explains the name of the multi-scaling regime. Finally, as there is no sufficient and necessary condition to ensure the existence of stationary distribution, we just impose it as an assumption.

\begin{assumption}[Stability]\label{assmpt: stability}
	For each $r\in (0,1)$, the SRBM $\{Z^\uu(t), t \geq 0\}$ is positive recurrent and has a unique stationary distribution $\pi^\uu$ on $\R_+^d$.
\end{assumption}

We denote by $Z^{(r)}$ the random vector in $\R_+^d$ that follows the stationary distribution of the $r$th SRBM. 
To shorten the notation, we use $\E_{\pi}[\cdot]$ (rather than $\E_{\pi^\uu}[\cdot]$) to denote expectation with respect to the stationary distribution when the index $r$ is clear from the context.

\begin{assumption}[Reflection Matrix]\label{assmpt: reflect matrix}
    The reflection matrix $R$ is a $\caP$-matrix.
\end{assumption}

A square matrix is a $\caP$-matrix if all of its principal minors are positive. It is an $\caM$-matrix if it can be expressed as $sI-B$, where $B$ is a nonnegative matrix with the spectral radius less than $s$. The class of $\caM$-matrices is a subclass of $\caP$-matrices, and the class of $\caP$-matrices is a subclass of completely-$\caS$ matrices. 

We further introduce a uniform moment bound condition, which allows us to develop a unified proof framework for general SRBMs.

\begin{condition}[Uniform Moment Bounds]\label{assmpt: moment bound P}
	We assume that for any $i,k\in \{1,2,...,d\}$,
	\begin{align} \label{eq: moment bound}
		\E_{\pi} \left[ \left( r^k Z_k^\uu \right)^d \right] =O(1), \quad 
		\E_{\pi}\left[\int_0^1 \left( r^{k}Z_k^\uu(t) \right)^d d Y_i^\uu(t)\right] = o(r^{i-1}) \quad \text{as } r \to 0,
	\end{align}
	where adopt the notation $f(r)=O(g(r))$ to indicate that there exist constants $r_1>0$ and $C>0$ such that $|f(r)| \leq C|g(r)|$ for all $r\in (0,r_1)$. Moreover, $f(r)=o(g(r))$ represents that $f(r) / g(r) \rightarrow 0$ as $r \downarrow 0$.
\end{condition}

The uniform moment bound condition ensures that, for any \(\varepsilon_0 > 0\) and $k=1,\ldots,d$, \( r^{k+\varepsilon_0} Z_k^\uu \) converges to zero in probability as \( r \to 0 \), in both interior and boundaries of the state space.

The necessity of imposing such high-order (\(d\)-th order) moment conditions arises from the truncation techniques employed when dealing with unbounded test functions in Section \ref{subsec: proof sketch P}. In the special case of $\caM$-reflection matrices, we demonstrate that Condition \ref{assmpt: moment bound P} can be weakened by replacing $d$ in \eqref{eq: moment bound} with a constant $2$, as shown in Section \ref{subsec: proof sketch}.

Later in Corollaries \ref{prop: M matrix}-\ref{prop: FBFS reentrant Queue} of the next section, we will demonstrate that Condition \ref{assmpt: moment bound P} is satisfied for some reflection matrices, including $\caM$-matrices, two-dimensional $\caP$-matrices, and lower triangular $\caP$-matrices.
\subsection{Main Results}

\noindent In this section, we establish the steady-state convergence of the scaled SRBM to independent exponential random variables in the multi-scaling regime. 
\begin{theorem} \label{thm: matrix P} 
	Suppose that Assumptions \ref{assmpt: multiscale}-\ref{assmpt: reflect matrix} and Condition \ref{assmpt: moment bound P} hold. Then
	\begin{align}\label{eq: Z convergence P}
		\left(\cc Z^{\uu}_1, \cc^2 Z^{\uu}_2,... ,  \cc^\Kdim Z^\uu_\Kdim\right) \Rightarrow  (Z_1^*, \ldots, Z_\Kdim^*), \quad \text{as } \cc \to 0.
	\end{align}
	Furthermore, $Z_1^*, \ldots, Z_\Kdim^*$ are independent, and for each component $k$, $Z_k^*$ is an exponential random variable with mean
	 \begin{equation}\label{eq: mean k}
		m_k = \frac{u_k^\T\Gamma u_k}{2u_k^\T R_{:,k}} > 0,
	\end{equation}
	where $^\T$ denotes the transpose, $R_{:,k}$ is the $k$th column of $R$, and $u_k\in \R^d$ is defined as
	\begin{equation}\label{eq: u}
		u_k=[w_{1k},\cdots, w_{k-1,k}, 1, 0, \ldots, 0]^\T\in \R^d,
	\end{equation}
	where the first $k-1$ elements of $u_k$ are the unique solution to the system of equations, with the uniqueness following from the $\caP$-matrix property of $R$:
	\begin{equation}\label{eq: w}
		\sum_{j=1}^{k-1}w_{jk} R_{j\ell} + R_{k\ell} = 0\quad \text{ for } \ell=1,\ldots,k-1,
	\end{equation}
	such that $u_k'R_{:,\ell}=0$ for all $\ell<k$.
\end{theorem}

\begin{remark}[Comparison to skew symmetric case] \label{rmk: skew}
	\citet{HarrWill1987} demonstrate that if the SRBM $Z$ satisfies the skew symmetric condition, i.e., 
	\begin{equation} \label{eq: skew symmetric}
		2\Lambda \Gamma \Lambda = \Lambda R V \Lambda^{-1} + \Lambda^{-1}V R^{\T} \Lambda,
	\end{equation}
	where $\Lambda = \operatorname{diag}(\Gamma_{11}^{-1/2}, \Gamma_{22}^{-1/2}, \ldots, \Gamma_{dd}^{-1/2})$ and $V=\operatorname{diag}(R_{11}^{-1}, R_{22}^{-1}, \ldots, R_{dd}^{-1})$,
	then the SRBM $Z$ has a product-form exponential stationary distribution with mean $\Gamma_{kk}/(2R_{kk}\delta_k)$ for $k$th component. If the family of SRBMs in our setting satisfies the skew symmetric condition, then Theorem \ref{thm: matrix P} recovers the result in the skew symmetric case. The proof is provided in Appendix \ref{subsec: proof lemmas}.
\end{remark}

\begin{remark}[Interpretation of $w$] \label{rmk: w}
	When SRBM serves as the diffusion approximation of an open generalized Jackson network, the reflection matrix is $R=I - P^\T$ (see \citet{Reim1984}), which is an $\caM$-matrix.
	Here $P$ represents the routing matrix of the network, and for all $i, j \in \{1, 2, \ldots, d\}$, $P_{ij}$ denotes the probability that a customer is routed to station $j$ after service completion at station $i$. In this context, the value of $w_{ij}$ is the probability that a customer starting from station $i$ will enter station $j$ before exiting the network or visiting any stations in $\{j + 1, ..., d\}$, as described in Lemma 7.3 of \citet{DaiGlynXu2023}. In this case, all elements of $u_k$ are nonnegative. However, for non-$\caM$-matrices, the signs of the elements are not guaranteed to be nonnegative. 
\end{remark}

When the reflection matrix $R$ is an $\caM$-matrix, \citet{HarrWill1987} demonstrated that condition \eqref{eq1:necessary} is also sufficient for the positive recurrence of the SRBM. Consequently, Assumption~\ref{assmpt: multiscale} indicates that Assumption \ref{assmpt: stability} holds. Besides, Condition \ref{assmpt: moment bound P} also holds for $\caM$-matrices, as shown in Section~\ref{subsec: uniform moment bound}. Therefore, we obtain the following corollary.

\begin{corollary}[$\caM$-reflection matrix Case]\label{prop: M matrix}
	Suppose that $R$ is a $\caM$-matrix and that Assumption \ref{assmpt: multiscale} holds. Then, conclusion \eqref{eq: Z convergence P} holds.
\end{corollary}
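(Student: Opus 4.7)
\textbf{Proof plan for Corollary \ref{prop: M matrix}.} The plan is to derive the corollary as an immediate consequence of Theorem \ref{thm: matrix P} by verifying that, whenever $R$ is an $\caM$-matrix and Assumption \ref{assmpt: multiscale} holds, the two remaining hypotheses of the theorem (namely Assumption \ref{assmpt: stability} and Assumption \ref{assmpt: moment bound P}, together with the structural $\caP$-matrix condition) are automatically satisfied. No fresh probabilistic argument is required for the corollary itself beyond the $\caM$-matrix uniform moment bound that is established separately in Section \ref{sec: uniform moment bound}.

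First, I would record the standard inclusion $\caM$-matrices $\subset \caP$-matrices: writing $R = sI - B$ with $B \ge 0$ and spectral radius $\rho(B) < s$, every principal submatrix of $R$ has the same form, and by Perron--Frobenius each such submatrix has positive determinant. Hence $R$ is a $\caP$-matrix, so the structural hypothesis of Theorem \ref{thm: matrix P} is met. Next, I would verify Assumption \ref{assmpt: stability} by invoking \citet{HarrWill1987}, as already noted in the excerpt: for an SRBM with $\caM$-reflection matrix, the necessary condition \eqref{eq1:necessary} is also sufficient for positive recurrence. Since Assumption \ref{assmpt: multiscale} gives $\delta^\uu = (r, r^2, \ldots, r^d) > 0$ componentwise for every $r \in (0,1)$, and $R$ is nonsingular as an $\caM$-matrix, condition \eqref{eq1:necessary} holds for each $r$, so $Z^\uu$ is positive recurrent and admits a unique stationary distribution $\pi^\uu$ for every $r \in (0,1)$.

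Third, I would appeal to the $\caM$-matrix uniform moment bound proved in Section \ref{sec: uniform moment bound}, which delivers Assumption \ref{assmpt: moment bound P} precisely in this setting via an induction on the coordinates built from BAR with carefully chosen test functions. With the $\caP$-matrix property, Assumption \ref{assmpt: stability}, and Assumption \ref{assmpt: moment bound P} all in place, Theorem \ref{thm: matrix P} applies directly and yields the stated convergence \eqref{eq: Z convergence P}. I do not anticipate any obstacle in the corollary itself; the only substantive content is the $\caM$-matrix moment bound, and that work is deferred to (and already announced for) the later section dedicated to it.
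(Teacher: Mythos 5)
Your proposal is correct and matches the paper's own derivation of Corollary \ref{prop: M matrix}: the paper likewise observes that an $\caM$-matrix is a $\caP$-matrix, that \citet{HarrWill1987} makes condition \eqref{eq1:necessary} sufficient for positive recurrence so Assumption \ref{assmpt: stability} follows from Assumption \ref{assmpt: multiscale}, and that Assumption \ref{assmpt: moment bound P} is verified for $\caM$-matrices in Section \ref{subsec: uniform moment bound}, whence Theorem \ref{thm: matrix P} applies. (The paper additionally walks through a direct BAR argument for this case in Section \ref{subsec: proof sketch}, but only as an expository warm-up for the general $\caP$-matrix proof; logically it adds nothing beyond your reduction.)
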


When $R$ is a non-$\caM$-matrix, analogous corollaries hold for several subclasses of $\caP$-matrices. The following two corollaries address two prominent non-$\caM$-matrix cases: the two-dimensional $\caP$-matrix and the lower triangular $\caP$-matrix. In both cases, positive recurrence of the SRBM under condition \eqref{eq1:necessary} has been established in prior literature; see \citet{HobsRoge1993,HarrHase2009} and \citet{Chen1996a}, respectively. Condition \ref{assmpt: moment bound P} is also satisfied, whose proofs are provided in Sections~\ref{subsubsec: 2d P matrix moment} and \ref{subsec: FBFS P matrix moment}.

\begin{corollary}[Two-dimensional Case]\label{prop: 2-d P matrix}
	Suppose $d=2$ and that Assumptions \ref{assmpt: multiscale} and \ref{assmpt: reflect matrix} hold. Then, conclusion \eqref{eq: Z convergence P} holds.
\end{corollary}

\begin{corollary}[Lower Triangular Case]\label{prop: FBFS reentrant Queue}
	Suppose that the reflection matrix $R$ is a lower triangular matrix, i.e., $R_{ij}=0$ for $i<j$, and that Assumption \ref{assmpt: multiscale} and \ref{assmpt: reflect matrix} hold. Then, conclusion \eqref{eq: Z convergence P} holds. 
\end{corollary}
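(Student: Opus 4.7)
The plan is to verify the hypotheses of Theorem~\ref{thm: matrix P} and then invoke it. Positive recurrence (Assumption~\ref{assmpt: stability}) under condition \eqref{eq1:necessary} was established for lower triangular $\caP$-reflection matrices by \citet{Chen1996a}. The substantive task is therefore to verify the uniform moment bound of Assumption~\ref{assmpt: moment bound P}, which I would carry out by strong induction on $k=1,\ldots,d$, establishing at each stage both $\E_\pi[(r^k Z_k^\uu)^n]=O(1)$ for $1\le n\le d$ and $\E_\pi[\int_0^1 (r^k Z_k^\uu(t))^n\,dY_i^\uu(t)]=o(r^{i-1})$ for every $i$.

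The structural feature that makes lower triangularity particularly tractable is that for each $k$ the vector $u_k$ defined in \eqref{eq: u} is automatically orthogonal to \emph{every} column $R_{:,\ell}$ with $\ell\neq k$: for $\ell<k$ this is the defining property \eqref{eq: w}, while for $\ell>k$ it follows because the nonzero support of $u_k$ (indices $\le k$) and the nonzero support of $R_{:,\ell}$ (indices $\ge \ell$) are disjoint. Consequently, the projected process $\tilde Z_k:=u_k^\T Z = Z_k + \sum_{j<k} w_{jk} Z_j$ satisfies the one-dimensional reflected identity
\[
\tilde Z_k(t)=u_k^\T X(t)+R_{kk}Y_k^\uu(t),
\]
with drift $u_k^\T\mu^\uu=-R_{kk}r^k$ and variance $u_k^\T\Gamma u_k$. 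The base case $k=1$ then reduces to a classical one-dimensional reflected Brownian motion whose stationary law is exponential with mean $\Gamma_{11}/(2R_{11}r)$, and for which $\E_\pi[Y_1^\uu(1)]=\delta_1^\uu=r$; both bounds in \eqref{eq: moment bound} follow directly at $k=1$.

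For the inductive step at stage $k$, the main tool is BAR applied to $f(z)=(u_k^\T z)^n$. By the orthogonality noted above, the entire boundary sum in BAR collapses to a single term proportional to $R_{kk}\E_\pi[\int_0^1 \tilde Z_k(s)^{n-1}\,dY_k^\uu(s)]$, and since $Z_k^\uu=0$ on the support of $dY_k^\uu$ the integrand reduces to a polynomial purely in $Z_1^\uu,\ldots,Z_{k-1}^\uu$. Rearranging the resulting identity expresses $r^k\E_\pi[\tilde Z_k^{n-1}]$ in terms of lower-order moments and mixed boundary integrals $\E_\pi[\int_0^1 Z_j^\uu(t)^a\,dY_k^\uu(t)]$ with $j<k$, which are controlled by the inductive hypothesis; this yields $\E_\pi[(r^k Z_k^\uu)^n]=O(1)$ recursively in $n$. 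The main obstacle, which I expect to require the most care, is verifying the cross-boundary bound $\E_\pi[\int_0^1 (r^k Z_k^\uu(t))^n\,dY_i^\uu(t)]=o(r^{i-1})$ for $i<k$: the natural choice $f(z)=z_i\,(u_k^\T z)^n$ in BAR extracts the desired integral but also generates a cross-covariance term from $\Gamma_{ik}$ that has to be balanced, and the $Y_j$-boundary terms for $j\notin\{i,k\}$ must be shown to be of strictly smaller order. The lower triangular structure ensures that the resulting linear system of BAR identities is itself triangular in a lexicographic ordering of $(i,k,n)$, so the required rate $o(r^{i-1})$ emerges after tracking the powers of $r$ contributed by the drift $\mu^\uu=-R\delta^\uu$ and by the inductive hypothesis.
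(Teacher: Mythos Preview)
Your proposal is essentially the paper's approach: reduce to Theorem~\ref{thm: matrix P} by verifying Assumption~\ref{assmpt: moment bound P} via induction on $k$, exploiting the key structural fact that lower triangularity makes $u_k^\T R_{:,\ell}=0$ for \emph{all} $\ell\neq k$ (not just $\ell<k$), so that BAR applied to powers of $u_k^\T z$ collapses to a single boundary term. Your observation that $u_k^\T R_{:,k}=R_{kk}$ and $u_k^\T\mu^\uu=-R_{kk}r^k$ in the triangular case is correct and matches the paper's computation.

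Two tactical differences are worth noting. First, for the boundary moments the paper uses $f(z)=(r^k z_k)^n\exp(-r^k z_j)$ rather than your $f(z)=z_i(u_k^\T z)^n$; the exponential factor extracts the $\nu_j$-term directly in terms of $(Z_k)^n$ and sidesteps the sign issue that arises because $(u_k^\T z)^n$ need not be nonnegative for odd $n$ when $u_k$ has negative entries (the paper handles odd $n$ via H\"older from even $n$). Second, you only discuss the boundary bound for $i<k$, but Assumption~\ref{assmpt: moment bound P} requires it for every $i$; the paper treats all $j\neq k$ uniformly with the same test function and observes that the resulting system in $j$ is itself triangular (since $R_{ji}=0$ for $i>j$), so the bounds can be obtained iteratively from $j=1$ to $d$. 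Your lexicographic-triangular remark is the right idea, but be sure to cover $i>k$ as well.
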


\begin{remark}
	Corollary \ref{prop: FBFS reentrant Queue} encompasses several interesting cases within the diffusion limit of queueing networks, including (i) multi-class feed-forward queueing networks, as described in \citet{HarrWill1992}, where customers always flow from lower-numbered stations to higher numbered ones; and (ii) multi-class re-entrant queueing networks operating under a first-buffer-first-served discipline, detailed in \citet{DaiYehZhou1997}, where stations are ordered inversely by their last visit time, assigning the highest number to the most recently visited station.
\end{remark}

\section{Numerical Experiments} \label{sec: validation}

This section presents numerical experiments of the asymptotic approximation derived in Theorem \ref{thm: matrix P} by two-dimensional examples. In Section \ref{subsec: analytical validation}, we demonstrate the convergence and its rate of the multi-scaling approximation for the steady-state average and correlation between two dimensions. In Section \ref{subsec: skew symmetric comparison}, we compare the multi-scaling approximation to the skew symmetric approximation, which is an approximation in the literature. Finally, in Section~\ref{subsec: simulation}, we show that the approximation helps improve the efficiency of simulation algorithms.

\subsection{Numerical Validation of Asymptotic Approximation} \label{subsec: analytical validation}

To validate the convergence and estimate the convergence rate of the multi-scaling approximation, we conduct numerical experiments on a two-dimensional example for which the exact steady-state moments are analytically available. This allows for a direct and quantitative comparison between the asymptotic results and the true values.

We consider the SRBM defined by the following parameters:
\begin{equation*}
	\Gamma = \begin{bmatrix} 1 & 0 \\ 0 & 1 \end{bmatrix}, \quad R = \begin{bmatrix} 1 & -\beta \\ 0 & 1 \end{bmatrix}, \quad \delta^\uu = \begin{bmatrix} r \\ r^2 \end{bmatrix}, \text{ and } \mu^\uu = -R\delta^\uu = \begin{bmatrix} -r + \beta r^2 \\ -r^2 \end{bmatrix},
\end{equation*}
where $\beta \in [0, 1)$ and $r \in (0, 1)$. The exact steady-state mean and correlation for this setting are given analytically by \citet{Fodd1983}, providing a rigorous benchmark for our numerical validation.

According to Theorem~\ref{thm: matrix P}, the scaled stationary vector $(r Z_1^{(r)}, r^2 Z_2^{(r)})$ converges weakly to $(Z_1^\ast, Z_2^\ast)$ as $r \to 0$, where $Z_1^\ast$ and $Z_2^\ast$ are independent exponential random variables with means $m_1 = m_2 = 0.5$. Notably, the theoretical scaled mean of the second dimension, $\E[r^2 Z_2^{(r)}]$, is exactly 0.5 for all $r \in (0,1)$ and $\beta \in [0,1)$, perfectly matching the asymptotic limit. Therefore, our numerical validation focuses on the relative errors of the scaled mean of the first dimension, $\E[r Z_1^{(r)}]$, towards 0.5, and the correlation between the two scaled dimensions, $r Z_1^{(r)}$ and $r^2 Z_2^{(r)}$, towards zero.

\begin{figure}[htb]
    \begin{subfigure}[b]{0.48\textwidth}
        \centering
        \includegraphics[width=\textwidth]{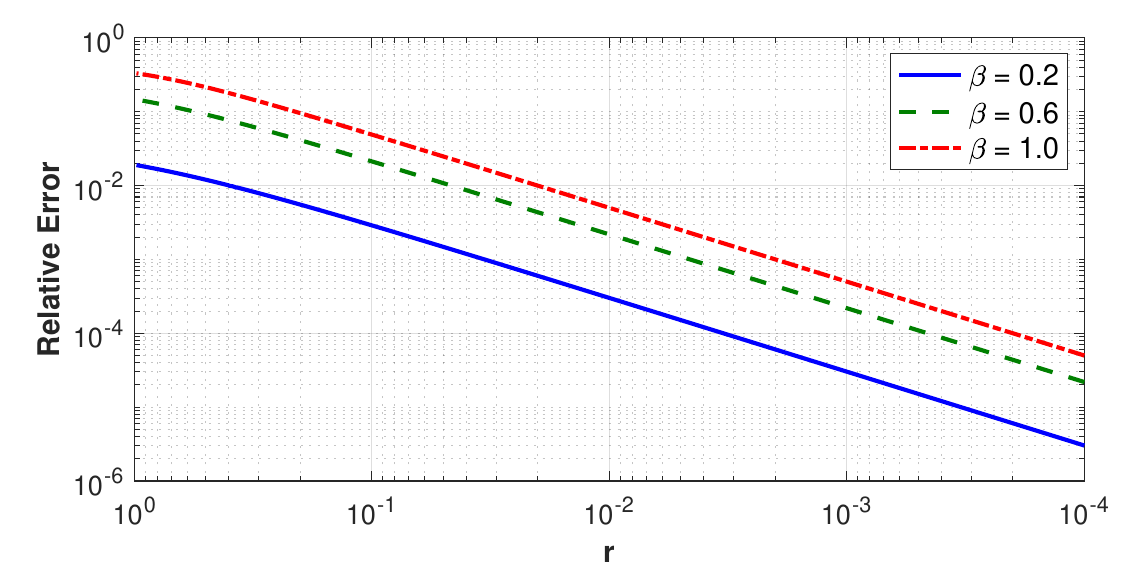}
    \end{subfigure}
    \hfill
    \begin{subfigure}[b]{0.48\textwidth}
        \centering
        \includegraphics[width=\textwidth]{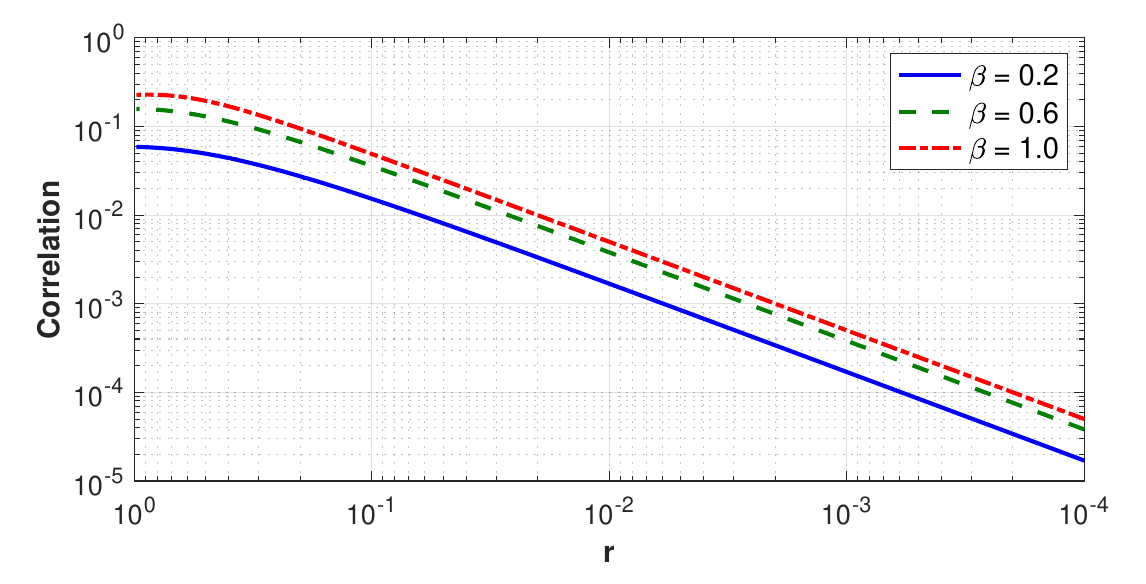}
    \end{subfigure}
    
    \caption{
		Numerical validation of the asymptotic approximation. 
		Left: Log-log plot of the relative error in the mean of the first dimension;
		Right: Log-log plot of the correlation between two dimensions.
		}
    \label{fig:validation_results}
\end{figure}

Figure~\ref{fig:validation_results} summarizes the main results. The left panel displays a log-log plot of the relative error for the mean of the first dimension, defined as $|\E[r Z_1^{(r)}] - 0.5| / \E[r Z_1^{(r)}]$. This plot demonstrates that the error decreases linearly with $r$, particularly in the range $r \in [10^{-4}, 10^{-1}]$. A regression performed on the log-log data (comprising 100 points) yields a slope ranging from 0.9959 to 0.9981 for various $\beta$ values, thus validating the convergence rate $O(r)$. Furthermore, we observe that smaller values of $\beta$ lead to lower relative errors with weaker inter-dimensional dependency. The right panel presents a log-log plot of the correlation between $r Z_1^{(r)}$ and $r^2 Z_2^{(r)}$. This plot is consistent with the asymptotic independence in Theorem~\ref{thm: matrix P} and exhibits a similar linear convergence towards zero.

In summary, the numerical results provide strong evidence for the theoretical convergence established in Theorem~\ref{thm: matrix P}. The observed linear convergence rate is an interesting point that deserves more detailed study in the future.

\subsection{Comparison to Skew Symmetric Approximation} \label{subsec: skew symmetric comparison}
In this section, we compare our multi-scaling approximation to the skew symmetric approximation in specific two-dimensional cases, where the exact stationary distribution is available analytically by \citet{Fodd1983}. Although many SRBMs do not satisfy the skew symmetric condition \eqref{eq: skew symmetric}, the skew symmetric approximation remains widely used in the literature. This approximation ignores the condition and utilizes a product-form distribution to approximate the stationary distribution, with means $m_k =\Gamma_{kk}/(2R_{kk}\delta_k)$ for $k=1,\ldots,d$ in Remark \ref{rmk: skew}.

We consider the SRBM defined by the following parameters:
\begin{equation*}
	\Gamma = \begin{bmatrix} 1 & 0 \\ 0 & 1 \end{bmatrix}, \quad R = \begin{bmatrix} 1 & -\beta \\ -\alpha & 1 \end{bmatrix}, \quad \delta^\uu = \begin{bmatrix} r \\ r^2 \end{bmatrix}, \text{ and } \mu^\uu = -R\delta^\uu = \begin{bmatrix} -r + \beta r^2 \\ \alpha r - r^2 \end{bmatrix},
\end{equation*}
where $\alpha,\beta \in (0, 1)$ and $r \in (\alpha, 1)$ to ensure $\mu^\uu_1 < 0$ and $\mu^\uu_2 < 0$ so that the exact steady-state mean for this setting is given analytically by \citet{Fodd1983}. The skew symmetric approximation suggests that $\E[r Z_1^{(r)}]=\E[r^2 Z_2^{(r)}] = 0.5$ for all $r \in (\alpha, 1)$.

As this setting violates the skew symmetric condition \eqref{eq: skew symmetric} and is designed to be in multi-scaling regime, we expect the multi-scaling approximation to outperform the skew symmetric approximation. According to Theorem~\ref{thm: matrix P}, the scaled stationary vector $(r Z_1^{(r)}, r^2 Z_2^{(r)})$ converges weakly to $(Z_1^\ast, Z_2^\ast)$ as $r \to 0$, where $Z_1^\ast$ and $Z_2^\ast$ are independent exponential random variables with means $0.5$ and $(1+\alpha^2)/(2(1-\alpha\beta))$. Notably, two approximations have the same mean of the first dimension (this holds for any multi-dimensional SRBM, since $u_1=[1,0,\ldots,0]^\T$ and then $m_1 = \Gamma_{11}/(2R_{11})$, which is the same as the mean of the first dimension in the skew symmetric approximation as shown in Remark \ref{rmk: skew}). Therefore, our numerical comparison focuses on the scaled mean of the second dimension, $\E[r^2 Z_2^{(r)}]$.

\begin{figure}[htb]
	\begin{subfigure}[b]{0.48\textwidth}
        \centering
		{\tiny (a) $\alpha=0.25$, $\beta=0.5$}
        \includegraphics[width=\textwidth]{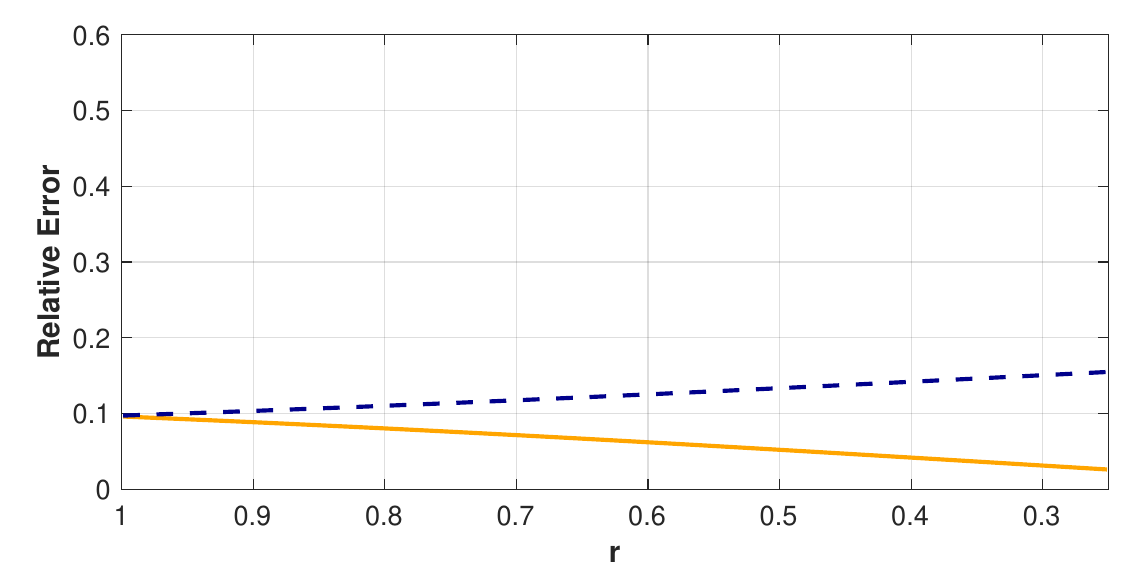}
    \end{subfigure}
    \hfill
    \begin{subfigure}[b]{0.48\textwidth}
        \centering
		{\tiny (b) $\alpha=0.5$, $\beta=0.5$}
        \includegraphics[width=\textwidth]{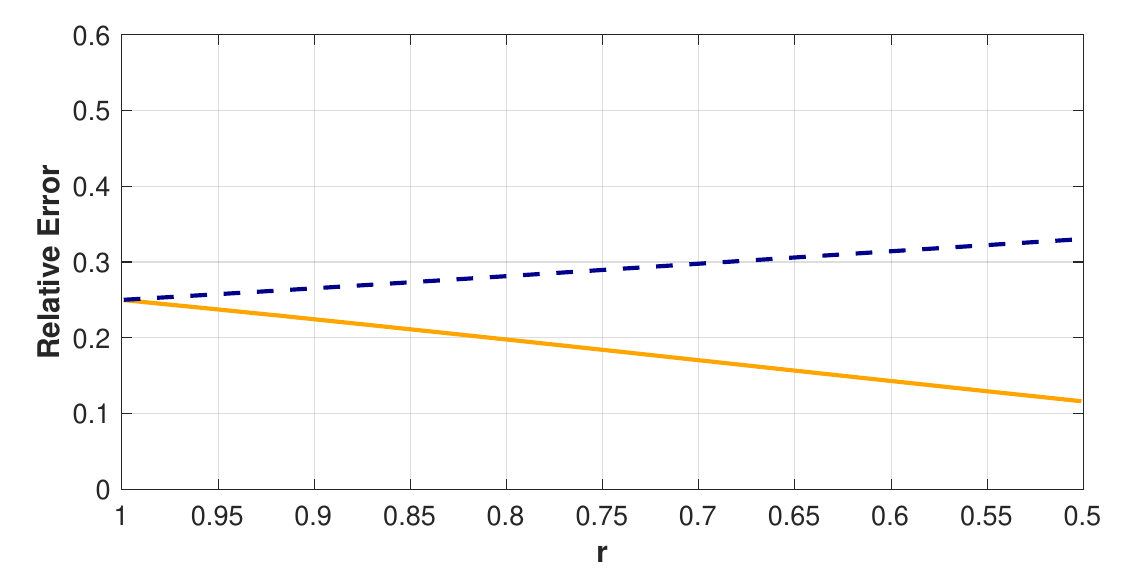}
    \end{subfigure}

	\begin{subfigure}[b]{0.48\textwidth}
        \centering
		{\tiny (c) $\alpha=0.25$, $\beta=1$}
        \includegraphics[width=\textwidth]{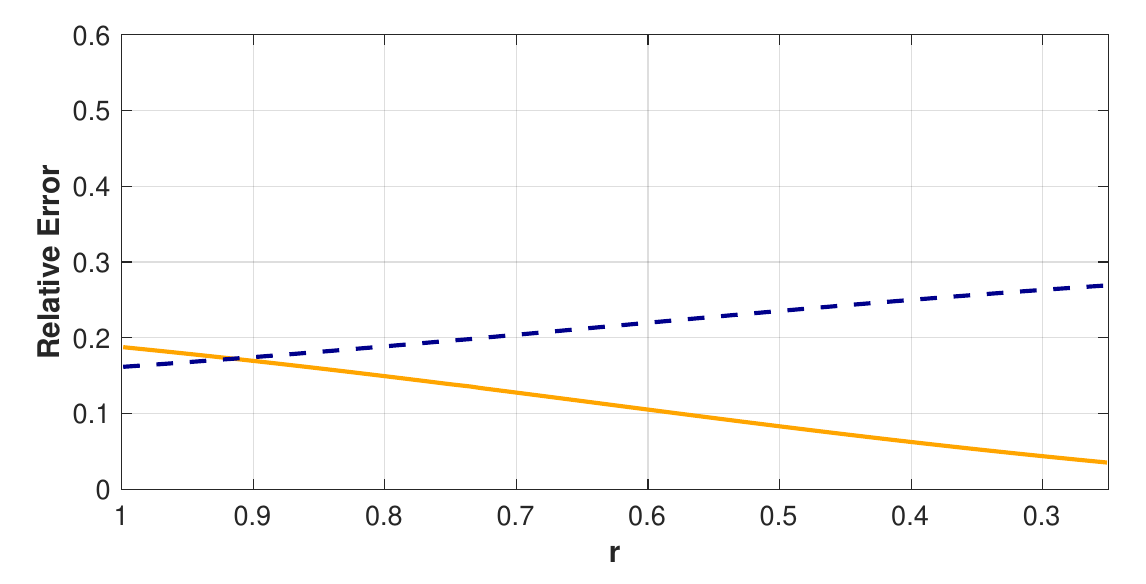}
    \end{subfigure}
    \hfill
    \begin{subfigure}[b]{0.48\textwidth}
        \centering
		{\tiny (d) $\alpha=0.5$, $\beta=1$}
        \includegraphics[width=\textwidth]{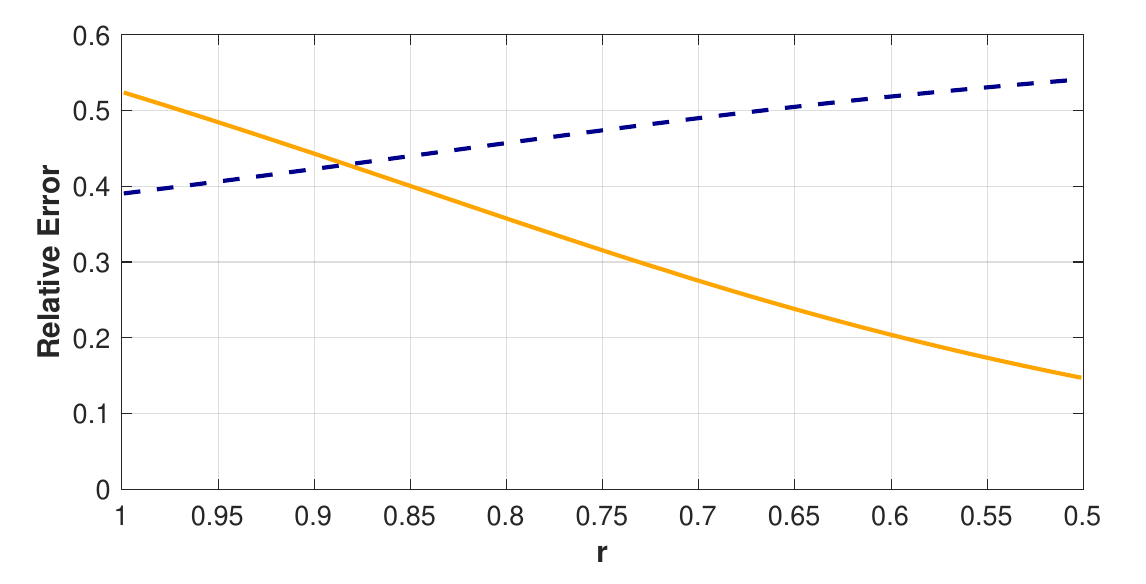}
    \end{subfigure}
    
    \caption{
		Relative errors in $\E[r^2 Z_2^{(r)}]$ of the multi-scaling approximation (solid line) and the skew symmetric approximation (dashed line) with $\alpha\in\{0.25, 0.5\}$ and $\beta\in\{0.5, 1\}$.
		}
    \label{fig:skew_symmetric_comparison}
\end{figure}

Figure~\ref{fig:skew_symmetric_comparison} presents the relative errors in estimating \(\E[r^2 Z_2^{(r)}]\) for both the multi-scaling approximation and the skew symmetric approximation, benchmarked against the exact value, over the range \(r \in (\alpha, 1)\) with \(\alpha \in \{0.25, 0.5\}\) and \(\beta \in \{0.5, 1\}\) for example. Across the four panels, we clearly observe that as \(r\) decreases, the relative error of the multi-scaling approximation consistently reduces, whereas the error of the skew symmetric approximation increases substantially. Comparing Panels (a) and (c), we observe that for values of \(r\) close to 1 and \(\beta = 0.5\), the two approximations produce similar errors. However, at a larger \(\beta = 1\), as shown in Panels (b) and (d), the skew symmetric approximation has a slight advantage for higher \(r\), whereas the multi-scaling approximation exhibits significantly better performance when \(r\) falls below 0.88. 

Further analysis of the error magnitudes across all four panels indicates that both approximations achieve higher accuracy for smaller values of \(\beta\) or \(\alpha\). This observation aligns with the findings in Section~\ref{subsec: analytical validation} for the multi-scaling approximation. For the skew symmetric approximation, this  results from being closer to satisfying the skew symmetric condition.

In summary, the numerical experiments reveal that the multi-scaling approximation outperforms the skew symmetric approximation, especially in the regime of small $r$.

\subsection{Enhancing Simulation Algorithms via Approximation} \label{subsec: simulation}
Recently, \citet{BlanChenSiGlyn2021} introduced an efficient multi-level Monte Carlo (MLMC) method for estimating the stationary mean of SRBMs. The efficiency of this method, however, depends on the SRBM satisfying a uniform stability assumption—namely, that the slackness vector $\delta^\uu_k > \varepsilon$ for all $k=1,\ldots,d$ and some $\varepsilon > 0$. In our setting, this condition is not met, which can lead to slow simulation towards the steady state. Given the significant impact of the initial distribution on the performance of the simulation, we propose employing our multi-scaling approximation as the initial distribution to accelerate the MLMC simulations.

We consider a three-dimensional SRBM with the following parameters:
\begin{equation*}
	\Gamma = \begin{bmatrix} 1 & 0 & 0 \\ 0 & 1 & 0 \\ 0 & 0 & 1 \end{bmatrix}, \quad R = \begin{bmatrix}
		1 & -0.6 & -0.4 \\
		-0.5 & 1 & -0.4 \\
		-0.2 & -0.3 & 1
	\end{bmatrix}, \quad \delta^\uu = \begin{bmatrix} r \\ r^2 \\ r^3 \end{bmatrix},
\end{equation*}
and $\mu^\uu = -R\delta^\uu$, where the scaling parameter $r$ is set to $0.2$.

We examine the impact of three distinct initial distributions on simulation performance: (1) origin, where the initial state is fixed at \((0,0,0)\); (2) skew symmetric approximation, where the initial state follows a product of independent exponential distributions with mean vector \((2.5,12.5,62.5)\); and (3) multi-scaling approximation, where the initial state is similarly drawn from independent exponential distributions but with mean vector \((2.5,22.32, 179.69)\).

We employ the MLMC method with the above initial distributions to estimate the stationary means \(\E[Z_1^{(0.2)}]\), \(\E[Z_2^{(0.2)}]\) and \(\E[Z_3^{(0.2)}]\). For each initial distribution, we generate 50 independent MLMC estimators to compute the corresponding 95\% confidence intervals, where each estimator is constructed from 2,000 independent sample paths. Since our setting does not satisfy the uniform stability assumption required by the standard MLMC method, the default hyperparameters may not yield efficient results. Therefore, we systematically explore a range of MLMC hyperparameter configurations, varying the number of levels \(L \in \{2,4,6,8\}\), the base simulation horizon \(T \in \{5000, 100000\}\), and the discretization base \(\gamma \in \{0.05, 0.1, 0.2\}\). Specifically, for each sample path simulated at level \(l \in \{0,\ldots,L-1\}\), the simulation horizon is set to \((l+1)T\), and the discretization step size is chosen as \(100\gamma^l\). The computational complexity is measured by the total number of discretization steps performed.

\begin{figure}[!htb]
	\centering
	\includegraphics[width=\textwidth]{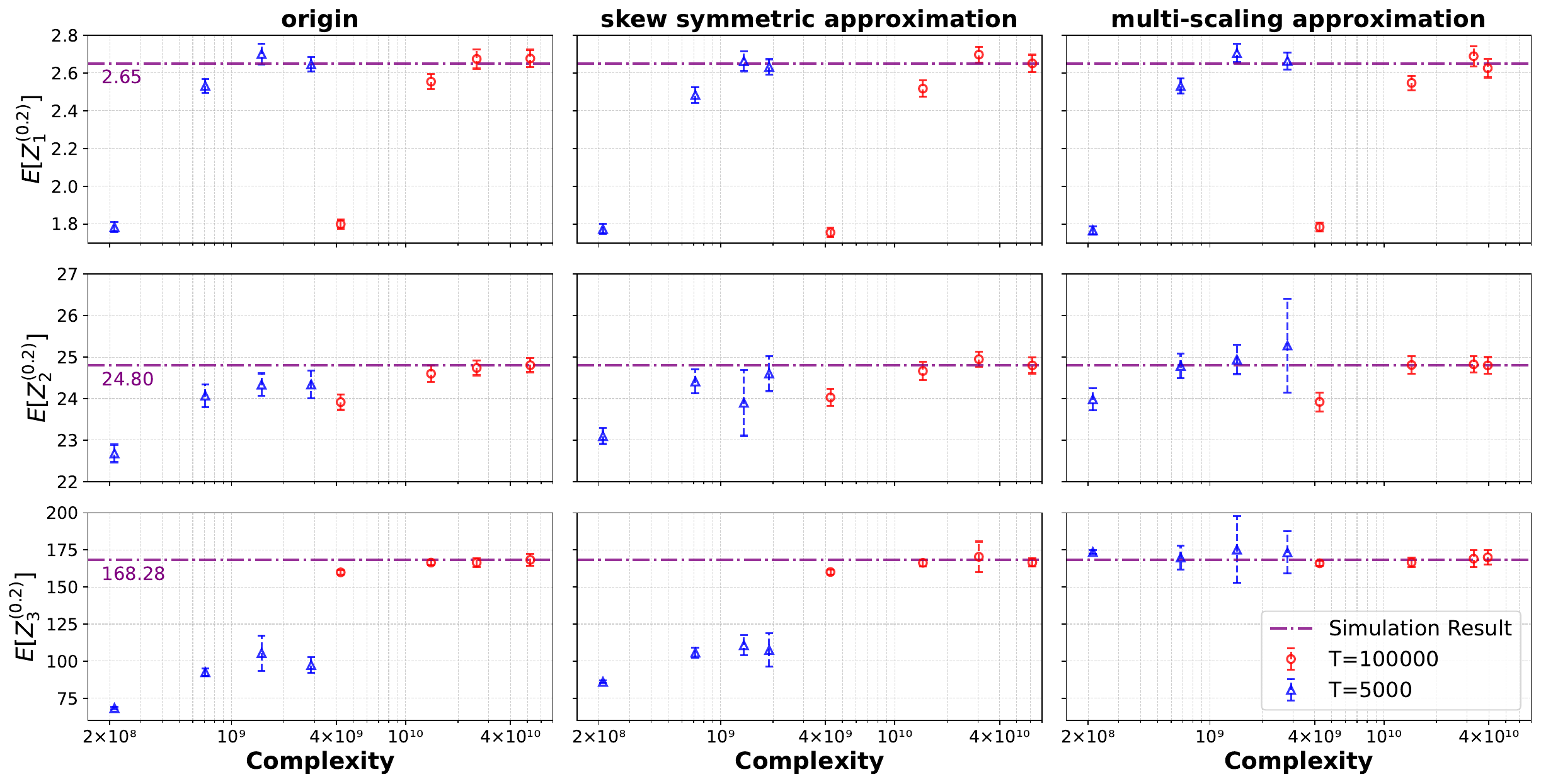} 
    
    \caption{
        MLMC simulation results for different initial distributions. Columns (left to right) correspond to initial distributions: origin, skew symmetric approximation, and multi-scaling approximation. Panels of row $k$ demonstrate estimated means $\E[Z_k^{(0.2)}]$ for $k=1, 2,3$ against computational cost (log scale). Dash-dotted line: estimated mean by averaging the simulation results with the highest computational cost within each panel in a row. 
    }
    \label{fig:sim1}
\end{figure}

The numerical results are summarized in Figure \ref{fig:sim1}. The figure is structured into three columns, each representing a distinct initial distribution strategy: the origin (left column), skew symmetric (middle column), and multi-scaling (right column). Each column further comprises three rows, corresponding respectively to the estimated stationary means \(\E[Z_1^{(0.2)}]\), \(\E[Z_2^{(0.2)}]\) and \(\E[Z_3^{(0.2)}]\). Within each panel, the estimated means are plotted against computational cost, presented on a logarithmic scale. Each data point, accompanied by an error bar, denotes an MLMC estimate along with its associated 95\% confidence interval, obtained under a specific configuration of MLMC hyperparameters (\(L, T\)). Points that share the same color (each color also corresponds to a unique shape) within each panel indicate simulations conducted with an identical base simulation horizon \(T\), but varying numbers of MLMC levels \(L\). Although we experimented with \(\gamma \in \{0.05, 0.1, 0.2\}\), the resulting patterns were qualitatively similar across these values. Therefore, for clarity and conciseness, we present only the results of \(\gamma=0.2\). 
Since the true stationary means are not analytically available, we provide an estimated reference value, represented by a dotted line. The estimate for each row is obtained by averaging the simulation results with the highest computational cost within each panel in that row. 

Comparing the three panels in the first row, corresponding to the less heavily loaded dimension, we observe that the discretization error constitutes the primary source of bias. This observation is supported by the consistent reduction in bias as the number of MLMC levels \(L\) increases, indicated by points of the same color. Notably, neither the base simulation horizon \(T\) nor the choice of initial distribution significantly affects this discretization bias.

In contrast, the third row, representing the more heavily loaded dimension, reveals that the transient error dominates the bias. Specifically, the first panel of this row clearly demonstrates a substantial bias when employing shorter base simulation horizons \(T\) (see the blue triangle points). When we utilize the skew symmetric approximation as the initial distribution, the bias is slightly reduced as shown in the second panel. However, adopting our multi-scaling approximation in the third panel significantly reduces this transient bias. This reduction underscores the significant effectiveness of our proposed approximation in improving the efficiency of SRBM simulations for short simulation horizons. In this case, using the multi-scaling approximation as the initial distribution takes less than 10\% of the computational cost to reach the same level of precision compared to other initial distributions. The second row has a similar pattern combined with bias from discretization error and transient error.

In conclusion, the experimental findings demonstrate that employing the multi-scaling approximation as the initial distribution substantially enhances the efficiency of SRBM simulations. This improvement is especially pronounced for dimensions operating near heavy traffic and with short simulation horizons.

\section{Proof of Main Results} \label{sec: proof main}

Our proof approach is rooted in the basic adjoint relationship (BAR),  first proposed by  \cite{HarrWill1987}. This framework allows us to directly characterize the stationary distribution of the SRBMs, thereby eliminating the need to analyze their transient dynamics. Specifically, we are able to employ the BAR with appropriately designed test functions to establish desired results.

In Section \ref{subsec: BAR}, we provide a brief overview of the BAR for SRBMs. Since the signs of the elements of $u_k$ in \eqref{eq: u} are not guaranteed to be nonnegative for a $\caP$-reflection matrix, the design of the test function is somewhat more complicated. To illustrate the main ideas in the proof of Theorem \ref{thm: matrix P}, we first present the proof for Corollary \ref{prop: M matrix}, the case of the $\caM$-reflection matrix, in Section \ref{subsec: proof sketch}. In Section \ref{subsec: proof sketch P}, we introduce a truncation technique to overcome the sign issue and complete the proof of Theorem \ref{thm: matrix P}.

\subsection{Basic Adjoint Relationship} \label{subsec: BAR}

\cite{HarrWill1987} and \citet{DaiDiek2011} demonstrate that the stationary distribution $\pi^\uu$ of the SRBM $\{Z^\uu(t), t \geq 0\}$ for $r\in (0,1)$ must satisfy the so-called \textit{basic adjoint relationship} (BAR), as follows.

\begin{lemma}[Basic Adjoint Relationship]
	For any $r\in (0,1)$ and each $i=1,2,...,d$, there is a probability measure $\nu_i^\uu$ concentrating on the boundary surface $F_i$, which is absolutely continuous with respect to the Lebesgue measure on $F_i$. This ensures that for any bounded Borel function $f$,
	\begin{equation} \label{eq: nu}
		\E_{\pi^\uu}\left[\int_0^1 f(Z^\uu(s))d Y_i^\uu(s)\right]= \delta_i^\uu \int_{F_i}fd\nu_i^\uu.
	\end{equation}
	Moreover, for any $f\in C_b^2(\R_+^d)$,
	\begin{equation}\label{eq:BAR87}
		\int_{\R_+^d}\left( \frac{1}{2} \inner{\Gamma, \nabla^2 f} + \inner{\mu^\uu,  \nabla f} \right)  d \pi^\uu+  \sum_{i=1}^\Kdim \delta_i^\uu \int_{F_i} \inner{R_{:,i},  \nabla  f} d \nu_i^\uu=0,
	\end{equation}
where $C_b^2(\R_+^d)$ is the set of twice continuously differentiable functions $f$ on $\R_+^d$ such that $f$ and its first and second-order derivatives are all bounded, $R_{:,i}$ is the $i$th column of $R$, and $\inner{}$ denotes the Frobenius inner product, defined as $\inner{A,B}=\sum_{i,j}A_{ij}B_{ij}$ for matrices $A$ and $B$, and $\inner{a,b}=\sum_{i}a_ib_i$ for vectors $a$ and $b$. 
\end{lemma}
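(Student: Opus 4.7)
The plan is to derive \eqref{eq:BAR87} from Itô's formula applied to $f(Z^\uu(\cdot))$ under stationarity, and to construct the boundary measures $\nu_i^\uu$ directly from the regulator processes $Y_i^\uu$. Since $Z^\uu(t) = X(t) + R Y^\uu(t)$ with $X$ a Brownian motion of drift $\mu^\uu$ and covariance $\Gamma$, and each $Y_i^\uu$ is continuous of bounded variation, Itô's formula (in its semimartingale form, which reduces to the classical chain rule in the $Y$-component) yields, for $f\in C_b^2(\R_+^d)$,
\begin{equation*}
f(Z^\uu(t)) - f(Z^\uu(0)) = \int_0^t \left[\tfrac{1}{2}\inner{\Gamma, \nabla^2 f(Z^\uu(s))} + \inner{\mu^\uu, \nabla f(Z^\uu(s))}\right] ds + \sum_{i=1}^d \int_0^t \inner{R_{:,i}, \nabla f(Z^\uu(s))}\, dY_i^\uu(s) + M(t),
\end{equation*}
where $M$ is the stochastic integral against the Brownian part of $X$, which is a true martingale because $\nabla f$ is bounded.

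Taking $Z^\uu(0)\sim \pi^\uu$ so that $Z^\uu(s)\sim\pi^\uu$ for every $s$, setting $t=1$ and taking expectations kills the LHS (stationarity) and the martingale term, while Fubini turns the $ds$-integral into $\int \mathcal A f\, d\pi^\uu$. This gives
\begin{equation*}
0 = \int_{\R_+^d}\left[\tfrac{1}{2}\inner{\Gamma,\nabla^2 f} + \inner{\mu^\uu,\nabla f}\right] d\pi^\uu + \sum_{i=1}^d \E_{\pi^\uu}\!\left[\int_0^1 \inner{R_{:,i}, \nabla f(Z^\uu(s))}\, dY_i^\uu(s)\right].
\end{equation*}
To identify the boundary terms, I would define, for each $i$ and each bounded Borel $g$, the set function $\tilde\nu_i^\uu(g) := \E_{\pi^\uu}\!\left[\int_0^1 g(Z^\uu(s))\, dY_i^\uu(s)\right]$. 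Because $Y_i^\uu$ increases only when $Z^\uu\in F_i$, the measure $\tilde\nu_i^\uu$ is supported on $F_i$; this establishes \eqref{eq: nu} up to a normalization constant $c_i = \tilde\nu_i^\uu(F_i) = \E_{\pi^\uu}[Y_i^\uu(1)]$.

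To pin down $c_i = \delta_i^\uu$, I would apply $\E_{\pi^\uu}[\cdot]$ to the decomposition $Z^\uu(1) - Z^\uu(0) = X(1) - X(0) + R\, Y^\uu(1)$. Stationarity forces the LHS expectation to be $0$, hence $\mu^\uu + R\, \E_{\pi^\uu}[Y^\uu(1)] = 0$ and so $\E_{\pi^\uu}[Y^\uu(1)] = -R^{-1}\mu^\uu = \delta^\uu$. Setting $\nu_i^\uu := (\delta_i^\uu)^{-1}\tilde\nu_i^\uu$ then produces a probability measure on $F_i$ satisfying \eqref{eq: nu}, and substituting back into the identity above yields \eqref{eq:BAR87}.

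The step I expect to be the real technical obstacle, which I would not reprove but rather cite from \cite{HarrWill1987, DaiDiek2011} (and ultimately the Harrison--Reiman and Kang--Ramanan regularity theory), is the absolute continuity of $\nu_i^\uu$ with respect to $(d-1)$-dimensional Lebesgue measure on $F_i$. This is not a formal consequence of the construction above; it relies on the non-degeneracy of $\Gamma$ together with a careful analysis showing that the occupation measure of $Z^\uu$ on the boundary face $F_i$, weighted by the local time $dY_i^\uu$, has a density. Everything else (Itô, stationarity, Fubini, the identification $c_i = \delta_i^\uu$) is routine and does not use the specifics of the multi-scaling regime.
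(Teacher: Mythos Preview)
The paper does not supply its own proof of this lemma; it simply states the result and attributes it to \cite{HarrWill1987} and \cite{DaiDiek2011}. Your proposal therefore goes beyond what the paper does, and the outline you give is essentially the standard argument found in those references: apply It\^o's formula to $f(Z^\uu)$, use stationarity to kill the left side and the martingale term, and read off the boundary integrals from the local-time terms.

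One point deserves a caveat. Your identification $c_i=\E_{\pi^\uu}[Y_i^\uu(1)]=\delta_i^\uu$ via taking expectations in $Z^\uu(1)-Z^\uu(0)=X(1)-X(0)+R\,Y^\uu(1)$ implicitly uses $\E_{\pi^\uu}[|Z^\uu(0)|]<\infty$, so that the two sides can be separated and the left side declared zero by stationarity. Finiteness of the first moment of the stationary distribution is true here, but it is not free; it is precisely one of the facts established in \cite{DaiDiek2011}, so you should either cite it at that step or replace the argument by one that avoids it (e.g.\ apply the BAR identity you have already derived to a sequence of smooth bounded approximations of the coordinate functions). Apart from this, your sketch is correct, and you rightly flag the absolute continuity of $\nu_i^\uu$ on $F_i$ as the genuinely nontrivial analytic input that must be imported from the literature.
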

The probability measures $\{\nu_i^\uu\}_{i=1}^d$ are often referred to as boundary probability measures, as they concentrate on boundary surfaces.
To simplify the notation, we similarly use $\E_{\nu_i}[\cdot]$ (rather than $\E_{\nu_i^\uu}[\cdot]$) to denote expectation with respect to the boundary measures for $i=1,\ldots,d$, whenever the index $r$ is clear from the context.
Then, using the fact that $\mu^\uu = -R\delta^\uu$ in \eqref{eq1:necessary} and $\delta_i^\uu=r^i$ in Assumption \ref{assmpt: multiscale}, the BAR \eqref{eq:BAR87} can be rewritten as follows: for all $f\in C_b^2(\R_+^d)$:
\begin{equation}\label{eq:BAR}
   \frac{1}{2}\mathbb{E}_{\pi}\left[ \Linner{\Gamma, \nabla^2 f({Z^\uu}) } \right]  - \sum_{i=1}^{\Kdim} r^i \mathbb{E}_{\pi}\left[\Linner{R_{:,i},  \nabla f(Z^\uu)}\right]  +  \sum_{i=1}^{\Kdim} r^i \mathbb{E}_{\nu_i}\left[\Linner{R_{:,i},  \nabla f(Z^\uu)}\right]    =0.
\end{equation}

\subsection{Proof of $\caM$-matrix Case}\label{subsec: proof sketch}

In this section, we intend to illustrate the main ideas in the proof of Theorem \ref{thm: matrix P} by presenting the proof for the $\caM$-reflection matrix case in Corollary \ref{prop: M matrix}. This proof serves the start for the proof of the general case.

We denote by $\phi^\uu(\theta)\equiv \mathbb{E}_{\pi}[\exp(\theta^\T Z^\uu )]$ the moment generating function (MGF) of $Z^\uu$ under the stationary distribution $\pi^\uu$. Consequently, Theorem \ref{thm: matrix P} is equivalent to proving that: 
\begin{equation} \label{eq: convergence1}
	\lim_{r \to  0}\phi^\uu \left(r\eta_1, \ldots, r^\Kdim \eta_{\Kdim}\right)   =  \prod_{j=1}^d\frac{1}{1-m_j\eta_j}~,  \quad \text{ for all }\eta \in \R_-^\Kdim\equiv \{x\in \R^d:x\leq 0\},
\end{equation}
where $m_j$ is specified in \eqref{eq: mean k}. To obtain \eqref{eq: convergence1}, we employ mathematical induction to prove the following:
\begin{equation}\label{eq: convergence}
	\lim_{r \to  0}\phi^\uu(0,...,0,r^k\eta_k,r^{k+1}\eta_{k+1},  ...,r^\Kdim \eta_\Kdim)   =  \prod_{j=k}^d\frac{1}{1-m_j\eta_j}~,  \quad \text{ for all }\eta \in \R_-^\Kdim,
\end{equation}
progressing from $k=d+1$ back to $k=1$. The base case $k=d+1$ is trivially satisfied because $\phi^\uu(0,...,0)=1$ and $ \prod_{j=d+1}^d \cdot = 1$ by convention.  Assuming \eqref{eq: convergence} is true for $k+1$, we aim to prove its validity for $k$.  Specifically, we intend to show that, for any $\eta \in \R_-^\Kdim$:
\begin{equation}\label{eq: inductive equation}
	\lim_{r \to 0}\phi^\uu(0,...,0,r^k\eta_k,r^{k+1}\eta_{k+1},  ...,r^\Kdim \eta_\Kdim)=\frac{1}{1-m_k\eta_k}\lim_{r \to  0}\phi^\uu(0,...,0,0, r^{k+1}\eta_{k+1}, ...,r^\Kdim \eta_\Kdim).
\end{equation}
As \eqref{eq: inductive equation} automatically holds for $\eta_k=0$, we focus on proving the case where $\eta_k < 0$.

\paragraph{MGF-BAR.}
The key idea of our proof is to apply BAR \eqref{eq:BAR} to characterize the MGF. Specifically, by substituting exponential test functions $f_\theta(z)=\exp (\theta^\T z)$, where $\theta \in \R_{-}^\Kdim$, into BAR \eqref{eq:BAR}, we obtain the MGF-BAR for any $r\in (0,1)$:
\begin{equation}\label{eq:MGF-BAR}
	\frac{1}{2} \theta^\T \Gamma \theta \phi^\uu(\theta)+\sum_{i=1}^\Kdim r^i \theta^\T R_{:,i}  \left[\phi^\uu_i (\theta)-\phi^\uu (\theta)\right]=0, \quad  { \text{ for any }} \theta \in \mathbb{R}_{-}^d,
\end{equation}
where $\phi_i^\uu(\theta)\equiv \E_{\nu_i}[ f_\theta( Z^\uu ) ]$ denotes the MGF of $Z^\uu$ under the boundary measure $\nu_i^\uu$ for $i=1,\ldots,d$.

\paragraph{Test Functions.} To prove \eqref{eq: inductive equation} for each $k=1,\ldots,d$ and for any $\eta\in\R^d_{-}$ with $\eta_k<0$, we will utilize the MGF-BAR in \eqref{eq:MGF-BAR} with two test functions $f_{\theta^\uu_k},f_{\tilde{\theta}^\uu_k}$ for all $r\in (0,1)$. The vectors $\theta^\uu_k$ and $\tilde{\theta}^\uu_k$ are defined as:
\begin{equation}\label{eq: theta}
	\theta_k^{(r)} = \sum_{\ell=k}^d r^\ell\eta_\ell\cdot u_\ell ,\quad \tilde{\theta}_k^{(r)}= r^{k+1/2}\eta_k \cdot u_k +\sum_{\ell=k+1}^d r^\ell\eta_\ell\cdot u_\ell,
\end{equation}
where the vectors $\{u_\ell\}_{\ell=1}^d$ are specified by \eqref{eq: u}. The vectors $\theta_k^{(r)}$ and $\tilde{\theta}_k^{(r)}$ are designed to satisfy
\begin{equation} \label{eq: theta property}
	(\theta_k^\uu)^\T R_{:,i}=0 \text{ for }i<k, \quad (\tilde\theta_k^\uu)^\T R_{:,i}=0 \text{ for }i<k,
\end{equation}
since $u_k^\T R_{:,i}=0$ for $i<k$ by \eqref{eq: w}. Under the assumption that the reflection matrix $R$ is an $\caM$-matrix, we further have $\theta_k^{(r)}, \tilde\theta_k^{(r)} \in \R^d_-$ by Remark \ref{rmk: w}.
Hence, $f_{\theta_k^{(r)}}$ and $f_{\tilde \theta_k^{(r)}}$ are both in $C_b^2(\R_+^d)$.

Substituting $f_{\theta_k^{(r)}}$ into the MGF-BAR \eqref{eq:MGF-BAR} and utilizing \eqref{eq: theta property}, 
we obtain
\begin{equation*}
	\frac{1}{2} (\theta_k^{(r)})^\T \Gamma \theta_k^{(r)} \phi^\uu(\theta_k^{(r)})+\sum_{i=k}^\Kdim r^i (\theta_k^{(r)})^\T R_{:,i}  \left[\phi^\uu_i (\theta_k^{(r)})-\phi^\uu (\theta_k^{(r)})\right]=0.
\end{equation*}
By the design of $\theta_k^{(r)}$ in \eqref{eq: theta}, we have $\theta_k^{(r)} = r^k\eta_k\cdot u_k +o(r^{k})$. Therefore, we get
\begin{align*}
	&\frac{1}{2} \left(r^k\eta_k\cdot u_k^\T +o(r^{k})\right) \Gamma \left( r^k\eta_k\cdot u_k +o(r^{k}) \right) \phi^\uu(\theta_k^{(r)})\\
	&\quad +\sum_{i=k}^\Kdim r^i \left(r^k\eta_k\cdot u_k^\T +o(r^{k})\right) R_{:,i}  \left[\phi^\uu_i (\theta_k^{(r)})-\phi^\uu (\theta_k^{(r)})\right]=0,
\end{align*}
which simplifies to
\begin{equation}  \label{eq: derivation11}
	\frac{1}{2}r^{2k}\eta_k^2 \cdot u_k'\Gamma u_k\phi^{(r)}(\theta^{(r)}_k) + r^{2k}\eta_k \cdot u_k'R_{:,k} \left[\phi_k^{(r)}(\theta^{(r)}_k)-\phi^{(r)}(\theta^{(r)}_k)\right]+o(r^{2k})=0.
\end{equation}
Since $R$ is an $\caM$-matrix, we have the following result concerning the second term in \eqref{eq: derivation11}. The proof of Lemma \ref{lmm: uk} is provided in Appendix \ref{subsec: proof lemmas}.
\begin{lemma}  \label{lmm: uk}
	When $R$ is a $\caP$-matrix, for all $k=1,...,d$, we have
	\begin{equation*}
		u_k'R_{:,k}>0.
	\end{equation*}  
\end{lemma}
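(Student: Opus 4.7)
My plan is to argue via a short Schur-complement identity that expresses $u_k^\T R_{:,k}$ as a ratio of two principal minors of $R$, each of which is positive by the $\caP$-matrix hypothesis. The key observation is that the last $d-k$ entries of $u_k$ vanish, so $u_k^\T R_{:,k} = \sum_{j=1}^{k} (u_k)_j R_{jk}$ depends only on the upper-left $k\times k$ principal submatrix of $R$, which I will denote by $R_k := (R_{ij})_{1\le i,j\le k}$.

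First I would truncate $u_k$ to $\tilde u_k := (w_{1k},\ldots,w_{k-1,k},1)^\T \in \R^k$ and rewrite the defining relations \eqref{eq: w} as the single row-vector identity $\tilde u_k^\T R_k = (0,\ldots,0,\, u_k^\T R_{:,k})$, with zeros in the first $k-1$ positions. Next I would form a matrix $M$ by replacing the $k$th row of $R_k$ with $\tilde u_k^\T R_k$; equivalently, $M$ is obtained from $R_k$ by the elementary row operations of adding $w_{jk}$ times row $j$ to row $k$ for $j=1,\ldots,k-1$, so $\det M = \det R_k$. Cofactor expansion of $\det M$ along the $k$th row then gives
\[
  \det R_k \;=\; \det M \;=\; \bigl(u_k^\T R_{:,k}\bigr)\, \det R_{k-1},
\]
where $R_{k-1}$ denotes the upper-left $(k-1)\times(k-1)$ principal submatrix of $R$, with the convention $\det R_0 := 1$ handling the base case $k=1$ (in which $u_1^\T R_{:,1} = R_{11}$ directly).

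The $\caP$-matrix hypothesis guarantees that every principal minor of $R$ is strictly positive, so $\det R_k > 0$ and $\det R_{k-1} > 0$. Rearranging yields $u_k^\T R_{:,k} = \det R_k / \det R_{k-1} > 0$, as claimed. I do not expect any real obstacle to this plan; the entire content is a two-line Schur-complement computation, and the only thing to notice is that the linear system \eqref{eq: w} encodes exactly the elementary row operations that reduce $\det R_k$ to $(u_k^\T R_{:,k})\det R_{k-1}$.
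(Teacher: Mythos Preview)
Your proposal is correct and follows essentially the same approach as the paper: both arguments establish the identity $u_k^\T R_{:,k} = \det R_k / \det R_{k-1}$ and conclude via positivity of principal minors. The only cosmetic difference is that the paper invokes the Schur complement formula directly (citing a reference), whereas you derive the same determinant identity from elementary row operations and cofactor expansion; the content is the same.
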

Consequently, we conclude that the second term is also of order $r^{2k}$. In addition, as $\eta_k<0$, we have
\begin{equation} \label{eq: derivation1}
	\phi^{(r)}(\theta^{(r)}_k) = \frac{1}{1-m_k\eta_k}\phi_k^{(r)}(\theta^{(r)}_k)+o(1),\quad  \text{ with }m_k=\frac{u_k'\Gamma u_k}{2u_k'R_{:,k}},
\end{equation}
which establishes the connection between $\phi^{(r)}(\theta^{(r)}_k)$ and $\phi_k^{(r)}(\theta^{(r)}_k)$. Since the covariance matrix $\Gamma$ is positive definite, we have  $m_k>0$.\\

To construct the relation between $\phi_k^{(r)}(\theta^{(r)}_k)$ and $\phi^{(r)}(\theta^{(r)}_{k})$, we substitute $f_{\tilde{\theta}_k^{(r)}}$ into the MGF-BAR \eqref{eq:MGF-BAR}, and utilizing \eqref{eq: theta property}, we obtain
\begin{equation*}
	\frac{1}{2} (\tilde\theta_k^{(r)})^\T \Gamma \tilde\theta_k^{(r)} \phi^\uu(\tilde\theta_k^{(r)})+\sum_{i=k}^\Kdim r^i (\tilde\theta_k^{(r)})^\T R_{:,i}  \left[\phi^\uu_i (\tilde\theta_k^{(r)})-\phi^\uu (\tilde\theta_k^{(r)})\right]=0.
\end{equation*}
From the definition in \eqref{eq: theta}, we have $\tilde\theta_k^{(r)} = r^{k+1/2}\eta_k\cdot u_k +o(r^{k+1/2})$, and hence, we get
\begin{align*}
	&\frac{1}{2} \left(r^{k+1/2}\eta_k\cdot u_k^\T +o(r^{{k+1/2}})\right) \Gamma \left( r^{k+1/2}\eta_k\cdot u_k +o(r^{{k+1/2}}) \right) \phi^\uu(\tilde\theta_k^{(r)})\\
	&\quad +\sum_{i=k}^\Kdim r^i \left(r^{k+1/2}\eta_k\cdot u_k^\T +o(r^{{k+1/2}})\right) R_{:,i}  \left[\phi^\uu_i (\tilde\theta_k^{(r)})-\phi^\uu (\tilde\theta_k^{(r)})\right]=0.
\end{align*}
Therefore, we have
\begin{equation} \label{eq: derivation22}
	r^{2k+1/2}\eta_k\cdot u_k^\T  R_{:,k}  \left[\phi^\uu_k (\tilde\theta_k^{(r)})-\phi^\uu (\tilde\theta_k^{(r)})\right]+o(r^{{2k+1/2}})=0.
\end{equation}
Since $\eta_k<0$ and $u_k'R_{:,k}>0$ as shown in Lemma \ref{lmm: uk}, we have
\begin{equation} \label{eq: derivation2}
	\phi_k^\uu(\tilde\theta^{(r)}_k)-\phi^\uu(\tilde\theta^{(r)}_k)+o(1) = 0.
\end{equation}

To derive \eqref{eq: inductive equation} from \eqref{eq: derivation1} and \eqref{eq: derivation2}, we need to establish the following lemma, which depends on the uniform moment bound condition in \eqref{eq: moment bound}. Intuitively, for each $j=1,...,d$, \eqref{eq: moment bound} shows that $r^jZ^\uu_j$ is uniformly bounded in expectation as $r\to 0$. Consequently, $r^{j+\epsilon}Z^\uu_j$ converges to zero in probability as $r\to 0$ for any $\epsilon>0$. Therefore, the $j$th components of $\theta^{(r)}_k$ and $\tilde{\theta}^{(r)}_k$ are negligible if their order is $o(r^j)$.
The detailed proof of Lemma \ref{lmm:SSCs} is provided in Appendix \ref{subsec: proof lemmas}.

\begin{lemma}\label{lmm:SSCs} When $R$ is an $\caM$-matrix and Assumption \ref{assmpt: multiscale} holds, for all $k=1,...,d$,  as $r\to 0$, we have  
	\begin{align*} 
		&\phi^\uu(\theta^{(r)}_k)-\phi^\uu(0,...,0,r^k\eta_k,  r^{k+1}\eta_{k+1},...,r^\Kdim \eta_\Kdim)= o(1),\\
		&\phi^\uu_k(\theta^{(r)}_k)-\phi^\uu_k(0,...,0,~~~~0,r^{k+1}\eta_{k+1},  ...,r^\Kdim \eta_\Kdim)= o(1),\\
		&\phi^\uu(\tilde{\theta}^{(r)}_k)-\phi^\uu(0,...,0,~~~~0,r^{k+1}\eta_{k+1},  ...,r^\Kdim \eta_\Kdim)= o(1), \\
		&\phi^\uu_k(\tilde{\theta}^{(r)}_k)-\phi^\uu_k(0,...,0,~~~~0,r^{k+1}\eta_{k+1},  ...,r^\Kdim \eta_\Kdim)= o(1).
	\end{align*}
\end{lemma}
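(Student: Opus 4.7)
The plan is to reduce each of the four assertions to an inequality of the form
\[
|\phi(\theta) - \phi(\theta')| \leq \sum_{j=1}^d |\theta_j - \theta'_j|\,\E[Z^\uu_j],
\]
where $\phi$ denotes either the stationary MGF $\phi^\uu$ or the boundary MGF $\phi_k^\uu$ and the expectation is taken with respect to the corresponding measure. This follows from the elementary inequality $|e^a - e^b| \leq |a-b|$ for $a,b \leq 0$ (by the mean value theorem applied to $x\mapsto e^x$ on $(-\infty,0]$) combined with $|(\theta-\theta')^\T z| \leq \sum_j |\theta_j - \theta'_j|z_j$ for $z \geq 0$. The $\caM$-matrix hypothesis, via Remark \ref{rmk: w}, ensures $u_\ell \geq 0$ componentwise for every $\ell$, so both $\theta^\uu_k$ and $\tilde\theta^\uu_k$, along with all four target vectors, lie in $\R^d_-$, and the exponents are nonpositive on the supports of $\pi^\uu$ and $\nu_k^\uu$.

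I would then read off componentwise bounds using the sparsity pattern of $u_\ell = [w_{1\ell},\ldots,w_{\ell-1,\ell},1,0,\ldots,0]^\T$. For the difference $\theta^\uu_k - (0,\ldots,0,r^k\eta_k,\ldots,r^d\eta_d)$, the $j$-th coordinate is $O(r^k)$ for $j<k$ and $O(r^{j+1})$ for $j\geq k$. For the difference $\tilde\theta^\uu_k - (0,\ldots,0,0,r^{k+1}\eta_{k+1},\ldots,r^d\eta_d)$, the $j$-th coordinate is $O(r^{k+1/2})$ for $j\leq k$ and $O(r^{j+1})$ for $j>k$. Under $\pi^\uu$, Assumption \ref{assmpt: moment bound P} with Jensen's inequality yields $\E_\pi[Z^\uu_j] \leq r^{-j}\bigl(\E_\pi[(r^j Z^\uu_j)^d]\bigr)^{1/d} = O(r^{-j})$, so multiplying coordinatewise and summing produces an aggregate bound of order $O(r)$ for the first claim and $O(r^{1/2})$ for the third, both of which are $o(1)$.

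For the boundary claims, the key additional observation is that $\nu_k^\uu$ is concentrated on $F_k = \{z : z_k = 0\}$, so the $k$-th coordinate of $\theta - \theta'$ can be dropped entirely; this is essential since the $k$-th difference in the fourth claim is only $O(r^{k+1/2})$. To bound $\E_{\nu_k}[Z^\uu_j]$ for $j \neq k$, I would apply identity \eqref{eq: nu} with $f(z) = (r^j z_j)^d$ together with the second half of Assumption \ref{assmpt: moment bound P}, giving $r^k\E_{\nu_k}[(r^j Z^\uu_j)^d] = o(r^{k-1})$, i.e., $\E_{\nu_k}[(Z^\uu_j)^d] = o(r^{-1-jd})$ and hence $\E_{\nu_k}[Z^\uu_j] = o(r^{-1/d - j})$ by Jensen. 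Substituting the componentwise differences yields an aggregate bound of order $o(r^{1-1/d})$, which is $o(1)$ as soon as $d \geq 2$; the $d=1$ case reduces to $k=1$ and is immediate. The main obstacle is precisely this last step: one must verify that the $r^{-1/d}$ loss incurred by passing from the $d$-th moment to the first moment on the boundary is absorbed by the extra positive powers of $r$ produced by the zero-entry pattern of the $u_\ell$ and by the $r^{1/2}$ buffer built into the definition of $\tilde\theta^\uu_k$.
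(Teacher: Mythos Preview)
Your argument is correct and takes a genuinely different route from the paper's. The paper proves Lemma~\ref{lmm:SSCs} by specializing the proof of Lemma~\ref{lmm:SSCs P}: it first shows $\beta_j^{(r)} Z_j^{(r)} \to 0$ in probability (via Markov's inequality using the moment bounds), observes that the integrands are uniformly bounded since all exponents are nonpositive in the $\caM$-matrix case, and then invokes the bounded convergence theorem. You instead exploit the pointwise Lipschitz estimate $|e^a-e^b|\le|a-b|$ on $(-\infty,0]$ to reduce the problem directly to first-moment bounds $\E[Z_j^{(r)}]$ under $\pi^{(r)}$ and $\nu_k^{(r)}$, which you then extract from Assumption~\ref{assmpt: moment bound P} by Jensen.

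Your approach is more elementary and delivers explicit rates (order $r$ under $\pi^{(r)}$ and $o(r^{1-1/d})$ under $\nu_k^{(r)}$), whereas the paper only obtains $o(1)$. The trade-off is that your Lipschitz bound requires both exponents to be nonpositive, which is precisely what fails for general $\caP$-matrices; the paper's bounded-convergence scheme, by contrast, extends directly to the truncated test functions of Lemma~\ref{lmm:SSCs P} where some coordinates of $\theta_k^{(r)}$ may be positive. So your method is cleaner here but does not survive the passage to Section~\ref{subsec: proof sketch P}, whereas the paper's argument is written once and reused.
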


\paragraph{Proof of Corollary \ref{prop: M matrix}.} Lemma \ref{lmm:SSCs} implies that \eqref{eq: derivation1} becomes
\begin{equation}\label{eq: BAR with theta}
	\phi^{(r)}(0,...,0,r^k\eta_k,  r^{k+1}\eta_{k+1},...,r^\Kdim \eta_\Kdim) = \frac{1}{1-m_k\eta_k}\phi_k^{(r)}(0,...,0,0,  r^{k+1}\eta_{k+1},...,r^\Kdim \eta_\Kdim)+o(1),
\end{equation}
and \eqref{eq: derivation2} becomes
\begin{equation}\label{eq: phi_k and phi}
\phi^{(r)}_k(0,...,0,0,r^{k+1}\eta_{k+1},...,r^d\eta_d)=\phi^{(r)}(0,...,0,0,r^{k+1}\eta_{k+1},  ...,r^d\eta_d)+ o(1) .
\end{equation}
Combining \eqref{eq: BAR with theta} and \eqref{eq: phi_k and phi}, we can conclude
$$\phi^{(r)}(0,...,0,r^k\eta_k,...,r^d\eta_d)=\frac{1}{1-m_k\eta_k}\phi^{(r)}(0,...,0,0,r^{k+1}\eta_{k+1},  ...,r^d\eta_d)+o(1),$$
and thus complete the proof of \eqref{eq: inductive equation} and Corollary \ref{prop: M matrix}.\\

\subsection{Proof of Non-$\caM$-matrix Case}\label{subsec: proof sketch P}

The proof of Theorem \ref{thm: matrix P} is based on the same main ideas in our analysis for the case with an $\caM$-reflection matrix in Section \ref{subsec: proof sketch}. However, the key technical issue arises from the fact that when the reflection matrix $R$ is a $\caP$-matrix, the vectors $\theta^\uu_k$ and $\tilde\theta^\uu_k$ in \eqref{eq: theta} may contain negative elements. As a result, the test functions $f_{\theta^\uu_k}$ and $f_{\tilde\theta^\uu_k}$  are not guaranteed to belong to $C_b^2(\R_+^d)$, which means they cannot be directly applied to the MGF-BAR \eqref{eq:MGF-BAR}. To address this issue, we design truncated versions of these test functions. The truncation will introduce additional terms into the resulting BAR equations. However, we show that these extra terms do not affect the asymptotic convergence, allowing us to extend the proof ideas to the case with $\caP$-reflection matrices.

\paragraph{Truncated Test Functions.}
To make $\theta^\uu_k$ and $\tilde\theta^\uu_k$ in the test functions applicable for the case of a  $\caP$-reflection matrix, we introduce the truncated exponential test functions as follows: for $\theta\in \R^d$,
\begin{equation*}  
    \tilde{f}_\theta(z) = \exp\left( \sum_{i=1}^d \kappa(\theta_iz_i) \right) \in C_b^2(\R^d_+),
\end{equation*}
where $\kappa$ is an upper-truncated function defined as
\begin{equation*}
    \kappa(x) = \begin{cases}
        x & x\leq 1\\
        3x^5 - 22x^4 + 62x^3 - 84x^2 + 56x - 14& 1 < x\leq 2\\
        2 & x>2.
    \end{cases},
\end{equation*}

To ensure that $\tilde{f}_\theta \in C_b^2(\R^d_+)$, we need $\kappa$ to be $C^2(\R)$ and upper bounded. Hence, we design $\kappa$ as an identity mapping for $x \leq a$ and a constant for $x \geq b$ (where $b > a > 0$), with polynomial interpolation in between. For the function to be $C^2(\R)$, $\kappa$ must have continuous function values, first derivatives, and second derivatives at the boundary points. This requires the interpolating polynomial to be of at least degree 5. In our specific case, we set $a=1$ and $b=2$, and the function $\kappa$, its first derivative $\dot{\kappa}$ and second-order derivative $\ddot{\kappa}$ are illustrated in Figure~\ref{fig:kappa} below.
\begin{figure}[htbp]
  \centering
  \includegraphics[width=1.\textwidth]{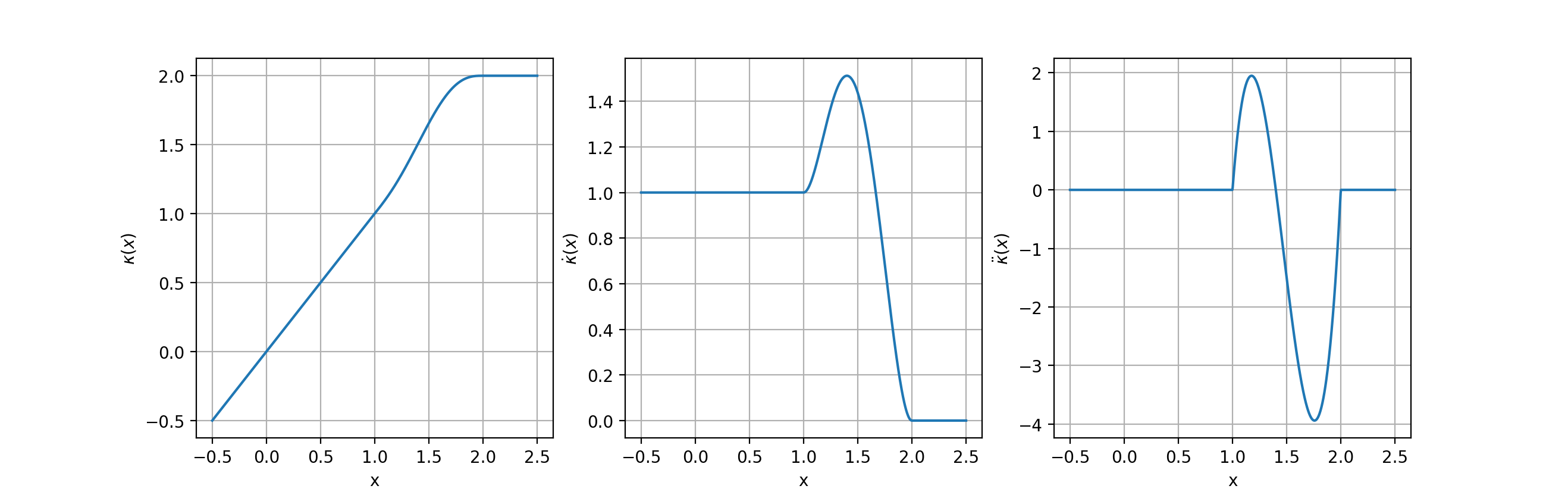}
  \caption{The function $\kappa$ and its first and second derivatives for $x\in [-0.5,2.5]$.}
  \label{fig:kappa}
\end{figure}

The function $\kappa$ has the following properties:
\begin{equation}\label{eq:kappa properties}
	\kappa(x)\leq 2, \abs{\dot{\kappa}(x)}\leq 2, \abs{\ddot{\kappa}(x)}\leq 4\I{1<x<2}, \kappa(x)-x\leq \I{x>1}, \abs{\dot{\kappa}(x)-1}\leq \I{x>1}.
\end{equation}
Similar to MGFs $\phi^\uu$, we define $\psi^\uu(\theta)\equiv \mathbb{E}_{\pi}[\tilde{f}_\theta(Z ^\uu)]$ and $\psi_i^\uu(\theta)\equiv \E_{\nu_i}[ \tilde f_\theta( Z  ^\uu) ]$ for $i=1,\ldots,d$. When $\theta\in \R_-^d$, we have $\tilde{f}_\theta(z)={f}_\theta(z)$ for any $z\in \R_+^d$, and hence, $\psi^\uu(\theta)=\phi^\uu(\theta)$ and $\psi_i^\uu(\theta)=\phi^\uu_i(\theta)$.  

Substituting the test functions $\tilde{f}_\theta(z)$ into the BAR \eqref{eq:BAR}, we obtain the truncated MGF-BAR, which is similar to MGF-BAR \eqref{eq:MGF-BAR} but includes an extra term:
\begin{equation}\label{eq:TMGF-BAR}
    \begin{aligned}
        &\frac{1}{2} \theta^\T\Gamma\theta \psi^\uu (\theta) +\sum_{i=1}^\Kdim r^i \theta^\T R_{:,i} \left[\psi_i^\uu (\theta) -\psi^\uu (\theta) \right]+\gamma^\uu(\theta)=0,
    \end{aligned}
\end{equation}
where the extra term $\gamma(\theta)$ is given by
\begin{equation} \label{eq: gamma}
	\begin{aligned}
		\gamma^\uu(\theta) &\equiv \frac{1}{2} \E_{\pi}\left[ \Linner{\Gamma, \epsilon_{H,\theta}\left( Z^\uu \right) }  \tilde{f}_\theta\left( Z^\uu \right)\right] \\
		&\quad  +\sum_{i=1}^\Kdim r^i \left(\E_{\nu_i}\left[  \epsilon_{g, \theta}^\T\left( Z^\uu \right) R_{:,i}  \tilde{f}_\theta\left( Z^\uu \right)\right] -\E_{\pi}\left[  \epsilon_{g, \theta}^\T\left( Z^\uu \right) R_{:,i}   \tilde{f}_\theta\left( Z^\uu \right) \right] \right).
	\end{aligned}
\end{equation}
The extra term corresponding to the gradient is given by $\epsilon_{g, \theta, j}(z)  = \theta_j [ \dot{\kappa}(\theta_j z_j) - 1]$ for $j=1,\ldots,d$, and the extra term corresponding to the Hessian is given by $\epsilon_{H, \theta, ij}(z)  = \theta_i\theta_j [\dot{\kappa}(\theta_i z_i)\dot{\kappa}(\theta_j z_j) - 1]+\theta_i^2\ddot{\kappa}(\theta_i z_i)\I{i=j}$ for $i,j\in \{1,\ldots,d\}$. 

We observe that for all $z\geq 0$,
\begin{equation} \label{eq: eps0}
	\epsilon_{g, \theta, j}(z) = 0, \quad \epsilon_{H, \theta, ij}(z) = 0, \quad \text{for } \theta_{i}, \theta_j\leq0.
\end{equation}
Therefore, when $\theta\in \R_-^d$, the extra term $\gamma^\uu(\theta)$ in \eqref{eq: gamma} is zero, and the truncated MGF-BAR \eqref{eq:TMGF-BAR} degenerates to the MGF-BAR \eqref{eq:MGF-BAR}.

\paragraph{Technical Lemmas.}

Since the test functions involve the truncation function $\kappa$, we need to introduce two technical lemmas.
Lemma \ref{lmm:SSCs P} is similar to Lemma \ref{lmm:SSCs}, with the only difference being the substitution of $\phi$ with $\psi$. 
Lemma \ref{lmm:extra} demonstrates that the extra terms \eqref{eq: gamma} in the truncated MGF-BAR \eqref{eq:TMGF-BAR} are negligible. Condition \ref{assmpt: moment bound P} provides the foundation for Lemmas \ref{lmm:SSCs P} and \ref{lmm:extra}. The proofs of Lemmas \ref{lmm:SSCs P} and \ref{lmm:extra} are provided in Appendix~\ref{subsec: proof lemmas}.

\begin{lemma}\label{lmm:SSCs P} When Assumptions \ref{assmpt: multiscale}-\ref{assmpt: reflect matrix} and Condition \ref{assmpt: moment bound P} hold, for all $k=1,...,d$,  as $r\to 0$, we have  
	\begin{align} 
		&\psi^\uu(\theta^{(r)}_k)-\phi^\uu(0,...,0,r^k\eta_k,  r^{k+1}\eta_{k+1},...,r^\Kdim \eta_\Kdim)= o(1), \label{eq: SSC A P}\\
		&\psi^\uu_k(\theta^{(r)}_k)-\phi^\uu_k(0,...,0,~~~~0,r^{k+1}\eta_{k+1},  ...,r^\Kdim \eta_\Kdim)= o(1), \label{eq: SSC B P}\\
		&\psi^\uu(\tilde{\theta}^{(r)}_k)-\phi^\uu(0,...,0,~~~~0,r^{k+1}\eta_{k+1},  ...,r^\Kdim \eta_\Kdim)= o(1), \label{eq: SSC C P}\\
		&\psi^\uu_k(\tilde{\theta}^{(r)}_k)-\phi^\uu_k(0,...,0,~~~~0,r^{k+1}\eta_{k+1},  ...,r^\Kdim \eta_\Kdim)= o(1). \label{eq: SSC D P}
	\end{align}
\end{lemma}

\begin{lemma}[Properties of Extra Terms]\label{lmm:extra}  
	When Assumptions \ref{assmpt: multiscale}-\ref{assmpt: reflect matrix} and Condition \ref{assmpt: moment bound P} hold, as $r\to 0$, we have
	\begin{equation*}
		\gamma^\uu(\theta^{(r)}_k) = o(r^{2k}), \quad \gamma^\uu(\tilde{\theta}^{(r)}_k) = o(r^{2k+1/2}),
	\end{equation*}
	where $\gamma^\uu$ is defined in \eqref{eq: gamma}.
\end{lemma}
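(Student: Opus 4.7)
The plan is to bound the three pieces of $\gamma^\uu(\theta)$ from \eqref{eq: gamma} separately---the interior Hessian term, the boundary gradient term, and the interior gradient term---at $\theta = \theta_k^\uu$ and at $\theta = \tilde\theta_k^\uu$. The structural starting point is \eqref{eq: eps0}: both $\epsilon_{g,\theta,j}$ and $\epsilon_{H,\theta,ij}$ vanish pointwise unless the relevant coordinate of $\theta$ is strictly positive, so the first task is to pin down which coordinates of the test vectors can be positive. Because $u_{\ell,j} = 0$ for $j > \ell$ and $u_{\ell,\ell} = 1$ by \eqref{eq: u}, the $j$-th coordinate of $\theta_k^\uu = \sum_{\ell \geq k} r^\ell \eta_\ell u_\ell$ has leading term $r^j \eta_j \leq 0$ whenever $j \geq k$; hence for all small $r$ only coordinates $j < k$ can produce $\theta_{k,j}^\uu > 0$, and in those coordinates $|\theta_{k,j}^\uu| = O(r^k)$. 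Analogously, $|\tilde\theta_{k,j}^\uu| = O(r^{k+1/2})$ for $j < k$.

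Next I would invoke the properties \eqref{eq:kappa properties} to obtain pointwise bounds: $\tilde f_\theta \leq e^{2d}$ uniformly; $|\epsilon_{g,\theta,j}(z)| \leq |\theta_j|\mathbbm{1}\{\theta_j z_j > 1\}$; and, after writing $\dot\kappa(\theta_i z_i)\dot\kappa(\theta_j z_j)-1 = \dot\kappa(\theta_i z_i)[\dot\kappa(\theta_j z_j)-1]+[\dot\kappa(\theta_i z_i)-1]$ and using $|\dot\kappa|\leq 2$, a companion bound for $|\epsilon_{H,\theta,ij}(z)|$ in terms of $|\theta_i\theta_j|(\mathbbm{1}\{\theta_j z_j > 1\} + \mathbbm{1}\{\theta_i z_i > 1\})$ plus $\theta_i^2\mathbbm{1}\{i=j\}\mathbbm{1}\{1<\theta_i z_i<2\}$. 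Every indicator is then replaced via the Markov-type inequality $\mathbbm{1}\{\theta_j z_j > 1\} \leq (\theta_j z_j)^d$, valid for $\theta_j, z_j \geq 0$. Assumption \ref{assmpt: moment bound P} supplies $\E_\pi[(Z_j^\uu)^d] = O(r^{-jd})$ directly, while the boundary identity $\delta_i^\uu \int f\,d\nu_i^\uu = \E_\pi[\int_0^1 f(Z^\uu)\,dY_i^\uu]$ from \eqref{eq: nu} converts the second line of \eqref{eq: moment bound} into $\E_{\nu_i}[(Z_j^\uu)^d] = o(r^{-jd-1})$.

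Assembling these bounds and summing over the relevant $i \in \{1,\ldots,d\}$ and $j < k$, the worst-case index is $j = k-1$, $i = 1$. For $\theta_k^\uu$ this gives an interior Hessian bounded by $O(r^k \cdot r^{k(d+1)} \cdot r^{-jd}) = O(r^{2k+d})$, an interior gradient bounded by $O(r \cdot r^{k(d+1)} \cdot r^{-jd}) = O(r^{k+d+1})$, and a boundary gradient bounded by $o(r \cdot r^{k(d+1)} \cdot r^{-jd-1}) = o(r^{k+d})$; since $d \geq k$, all three are $o(r^{2k})$. Replacing $r^k$ with $r^{k+1/2}$ throughout yields $o(r^{2k+1/2})$ for $\tilde\theta_k^\uu$ by identical arithmetic.

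The main obstacle is the borderline case $d = k$, $j = k-1$, $i = 1$, in which the computed exponent for the boundary gradient term equals $2k$ exactly: this is precisely where the \emph{strict} little-$o$ on the second line of \eqref{eq: moment bound} is essential and cannot be relaxed to a big-$O$. A secondary care point is the $i \neq j$ cross terms in the Hessian: the positivity restriction must be applied to the coordinate whose indicator is nonvanishing, while the other factor $|\theta_i|$ only needs the crude estimate $|\theta_{k,i}^\uu| = O(r^{\max(i,k)})$, which holds for every coordinate regardless of sign.
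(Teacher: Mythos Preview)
Your proposal is correct and follows essentially the same route as the paper's own proof: both arguments isolate the coordinates $j<k$ as the only ones where $\theta^{(r)}_{k,j}$ can be positive, bound $\tilde f_\theta$ by $e^{2d}$ and the error functions $\epsilon_{g},\epsilon_{H}$ via the $\kappa$-properties \eqref{eq:kappa properties}, replace the indicator $\mathbbm{1}\{\theta_j z_j>1\}$ by $(\theta_j z_j)^d$ through Markov's inequality, and then insert the moment bounds of Assumption~\ref{assmpt: moment bound P}. Your tracking of the worst-case indices $j=k-1$, $i=1$ and your observation that the \emph{strict} $o(\cdot)$ in the boundary moment bound is exactly what is needed when $d=k$ are nice sharpenings of the paper's exposition; the paper instead writes out the Hessian cases ($i,j<k$ versus $i<k\le j$) separately, but the content is the same.
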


When $\theta^\uu_k, \tilde{\theta}^\uu_k \in \R_-^d$, Lemma \ref{lmm:SSCs P} will reduce to Lemma \ref{lmm:SSCs}, and Lemma \ref{lmm:extra} will be trivially satisfied since $\gamma^\uu(\theta^{(r)}_k)=\gamma^\uu(\tilde{\theta}^{(r)}_k)=0$. 

\paragraph{Proof of Theorem \ref{thm: matrix P}.} 
The proof of Theorem \ref{thm: matrix P} is similar to that of Corollary \ref{prop: M matrix} in Section \ref{subsec: proof sketch}. Specifically, by applying $\theta^{(r)}_k$ into the truncated MGF-BAR \eqref{eq:TMGF-BAR}, the properties of $\theta^{(r)}_k$ in \eqref{eq: theta property} and Lemma \ref{lmm:extra} imply that: 
\begin{align*}
	&\frac{1}{2}r^{2k}\eta_k^2 \cdot u_k'\Gamma u_k\psi^{(r)}(\theta^{(r)}_k) + r^{2k}\eta_k \cdot u_k'R_{:,k} \left[\psi_k^{(r)}(\theta^{(r)}_k)-\psi^{(r)}(\theta^{(r)}_k)\right]+o(r^{2k})=0,
  \end{align*}
which is similar to \eqref{eq: derivation11}, except that the extra term $\gamma^\uu(\theta^{(r)}_k)$ appears with order $o(r^{2k})$ by Lemma \ref{lmm:extra}. According to Lemma \ref{lmm: uk}, the second term is also of order $r^{2k}$. Hence, Lemma~\ref{lmm:SSCs P} imply the same result as \eqref{eq: BAR with theta}. Since the covariance matrix $\Gamma$ is positive definite, we have $m_k > 0$.

By applying $\tilde{\theta}^{(r)}_k$ into the truncated MGF-BAR \eqref{eq:TMGF-BAR}, the properties of $\tilde{\theta}^{(r)}_k$ in \eqref{eq: theta property} and Lemma \ref{lmm:extra} imply that
\begin{equation*}
	r^{2k+1/2} \eta_k \cdot u_k^\T R_{:,k}  \left[\psi_k^\uu(\tilde\theta^{(r)}_k)-\psi^\uu(\tilde\theta^{(r)}_k)\right]+o(r^{2k+1/2})  = 0,
\end{equation*}
which is similar to \eqref{eq: derivation22}, except that the extra term $\gamma^\uu(\tilde{\theta}^{(r)}_k)$ appears with order $o(r^{2k+1/2})$ by Lemma \ref{lmm:extra}.  Hence, Lemma \ref{lmm:SSCs P} imply the same result of \eqref{eq: phi_k and phi}.

Therefore, \eqref{eq: BAR with theta} and \eqref{eq: phi_k and phi} together establish the proof of \eqref{eq: inductive equation} and complete the proof of Theorem \ref{thm: matrix P}.

\section{Uniform Moment Bounds} \label{sec: uniform moment bound}

In Sections \ref{subsec: uniform moment bound}-\ref{subsec: FBFS P matrix moment}, we prove Condition \ref{assmpt: moment bound P}, for $\caM$-reflection matrix, $2$-dimensional case and lower triangular $\caP$-reflection matrix, respectively. Such uniform moment bounds play a pivotal role in proving Corollary \ref{prop: M matrix}-\ref{prop: FBFS reentrant Queue}, respectively. To simplify the calculation and better illustrate the main proof ideas, we formally apply unbounded test functions in the proofs throughout Sections \ref{subsec: uniform moment bound}-\ref{subsec: FBFS P matrix moment}. In Section \ref{subsec: truncate}, we make the proofs rigorous, explaining how to replace the unbounded test functions with the truncated ones in the proofs.

By using the definition of boundary distribution \eqref{eq: nu}, we can restate \eqref{eq: moment bound} as follows: for any $i,j\in \{1,...,d\}$,
\begin{align}  \label{eq: moment bound P}
	\E_{\pi} \left[ \left( r^j Z_j^\uu \right)^d \right] =O(1), \quad 
	\E_{\nu_i}\left[ \left( r^{j}Z_j^\uu \right)^d \right] = o(1/r) \quad \text{as } r \to 0.
\end{align}

\subsection{\texorpdfstring{$\caM$}{M}-matrix Case}\label{subsec: uniform moment bound}

To prove \eqref{eq: moment bound P}, it is sufficient to prove that for any $k=1,\ldots, d$ and any $n\in \Z_+$, there exist positive constants $C_{k,n}, C_{1, k,n}, \ldots, C_{d, k,n}<\infty$ such that for all $r\in (0,r_0)$,
\begin{align}
	&\E_{\pi} \left[\left(r^k Z_k^\uu\right)^{n}\right] < C_{k,n}, \quad r^{i-1}\E_{\nu_i}\left[ \left( r^{k}Z_k^\uu \right)^n \right] < C_{i,k,n} , \quad  \forall i = 1,\ldots,d,  \label{eq: moment bound induction} 
\end{align}
where $r_0\in (0,1]$ is a constant independent of $k$ and $n$, and will be determined later in Lemma~\ref{lmm: eta}. 
The first statement of \eqref{eq: moment bound P} is a direct corollary of the first one in \eqref{eq: moment bound induction} for $n=1$. 
The second statement of \eqref{eq: moment bound P} can be proved by Hölder's inequality as follows: for any $r\in (0,r_0)$ and any $i,k\in\{1,\ldots, d\}$,
\begin{equation*}
r\E_{\nu_i} \left[ r^k Z_k^\uu \right] \leq \left( r^i\E_{\nu_i} \left[ \left( r^k Z_k^\uu \right)^{i} \right] \right)^{1/i} \leq \left( r C_{i,k,i} \right)^{1/i} = o(1), \text{ as $r\to 0$.}
\end{equation*}

\begin{lemma}\label{lmm: eta}
Define
\begin{equation*}
	r_0 \equiv  \min_{\{1\leq k\leq i \leq d \mid u_k^\T R_{:,i} \neq 0\}}   \left( \frac{u_k^\T R_{:,k}}{\Kdim \abs{u_k^\T R_{:,i}}} \right)^{\frac{1}{i-k}}   .
\end{equation*}
Then, $r_0>0$ and for any $r\in (0,r_0)$, and any $k = 1, \ldots, \Kdim$, we have
	\begin{equation} \label{eq: r0 condition}
		\sum_{i=k}^\Kdim r^{i-k} u_k^\T R_{:,i} = u_k^\T R_{:,k} + \sum_{i=k+1}^\Kdim r^{i-k} u_k^\T R_{:,i}  \geq \frac{1}{\Kdim }    u_k^\T R_{:,k}.
	\end{equation}
\end{lemma}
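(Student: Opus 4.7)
The plan is in two simple steps: establish $r_0 > 0$ from the finite-minimum structure, then derive \eqref{eq: r0 condition} by a term-by-term bound that exploits the definition of $r_0$.

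For positivity, I would first read the indexing in the definition of $r_0$ as ranging over the finite collection of pairs $(k, i)$ with $k < i \leq d$ and $u_k^\T R_{:,i}\neq 0$; the diagonal case $i=k$ contributes only the $r$-independent term $u_k^\T R_{:,k}$ to \eqref{eq: r0 condition} and must be excluded from the minimum so that the exponent $1/(i-k)$ is well-defined. Lemma \ref{lmm: uk} guarantees $u_k^\T R_{:,k} > 0$ for every $k$, and the nonzero constraint makes each denominator strictly positive, so every quantity inside the minimum is a positive real raised to a positive rational power, and hence positive. A finite minimum of positive reals is positive, which gives $r_0>0$. Should the index set be empty for some $k$ (i.e.\ all off-diagonal entries $u_k^\T R_{:,i}$ with $i>k$ vanish), then \eqref{eq: r0 condition} reduces to $u_k^\T R_{:,k}\geq \tfrac{1}{d}u_k^\T R_{:,k}$ and holds trivially.

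Next, I would fix $k$ and $r \in (0, r_0)$ and bound each off-diagonal term. The definition of $r_0$ directly implies $r^{i-k}\abs{u_k^\T R_{:,i}} \leq \tfrac{1}{d}\, u_k^\T R_{:,k}$ whenever $k < i \leq d$ (the bound being vacuous when $u_k^\T R_{:,i}=0$). Summing over the at most $d-k$ such indices and applying $\sum_{i=k+1}^d r^{i-k} u_k^\T R_{:,i} \geq -\sum_{i=k+1}^d r^{i-k}\abs{u_k^\T R_{:,i}}$ yields
\begin{equation*}
    u_k^\T R_{:,k} + \sum_{i=k+1}^d r^{i-k} u_k^\T R_{:,i} \;\geq\; u_k^\T R_{:,k} - \tfrac{d-k}{d}\, u_k^\T R_{:,k} \;=\; \tfrac{k}{d}\, u_k^\T R_{:,k} \;\geq\; \tfrac{1}{d}\, u_k^\T R_{:,k},
\end{equation*}
which is exactly \eqref{eq: r0 condition}.

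I do not expect any real obstacle here. The whole argument is a one-line perturbation estimate around the strictly positive diagonal term $u_k^\T R_{:,k}$ provided by Lemma \ref{lmm: uk}; the only mild care required is to parse the index set in the definition of $r_0$ so that the exponent $1/(i-k)$ is well-defined, and to handle the degenerate case in which no off-diagonal nonzero entries exist, both of which are dispatched in the opening step.
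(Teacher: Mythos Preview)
Your proposal is correct and follows essentially the same approach as the paper: use Lemma~\ref{lmm: uk} for positivity of $r_0$, then bound each off-diagonal term by $r^{i-k}\abs{u_k^\T R_{:,i}}\leq \tfrac{1}{d}u_k^\T R_{:,k}$ and sum over $i=k+1,\ldots,d$. If anything, your write-up is slightly more careful—you make the sign of the lower bound explicit and you address the degenerate case where the index set is empty, both of which the paper glosses over.
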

\begin{proof}[Proof of Lemma \ref{lmm: eta}.] 
	According to Lemma \ref{lmm: uk} that $u_k^\T R_{:,k}>0$,
	$r < r_0$ implies that
    \begin{equation*}
        r < \left( \frac{u_k^\T R_{:,k}}{\Kdim \abs{u_k^\T R_{:,i}}} \right)^{\frac{1}{i-k}}  \quad \text{ for any $1\leq k \leq i \leq d$ and $u_k^\T R_{:,i} \neq 0$}.
    \end{equation*}
    Raising both sides of the inequality to the power of $i-k$ and then multiplying both sides by $|u_k^\T R_{:,i}|$, we have
    \begin{equation*}
        \abs{u_k^\T R_{:,i}}r^{i-k} <    \frac{1}{\Kdim }u_k^\T R_{:,k}    \quad \text{ for any $1\leq k \leq i \leq d$ and $u_k^\T R_{:,i} \neq 0$}.
    \end{equation*}
    Therefore, for any $k = 1, \ldots, d$,
    \begin{equation*}
		\sum_{i=k+1}^\Kdim r^{i-k} u_k^\T R_{:,i}  \leq   \sum_{i=k+1}^\Kdim r^{i-k} \abs{u_k^\T R_{:,i}} \leq  \frac{d-k}{d }u_k^\T R_{:,k}   \leq u_k^\T R_{:,k}  - \frac{1}{d }u_k^\T R_{:,k} ,
    \end{equation*}
    which implies \eqref{eq: r0 condition}.
\end{proof}

In the following part, we will use the mathematical induction with two directions to prove \eqref{eq: moment bound induction}. Here, unlike the induction argument for our main result in \eqref{eq: inductive equation}, we shall start from the least congested component 1 to the most congested component $d$. Specifically, we will prove \eqref{eq: moment bound induction} for a fixed $k$ by assuming that they hold for any $\ell=1,\ldots,k-1$ ($k=1$ for empty information). However, when proving \eqref{eq: moment bound induction} for a given $k$, we will prove them for a fixed $n$ by assuming that they hold for any $m=0,1,\ldots,n-1$. 

Suppose that \eqref{eq: moment bound induction} holds for any $\ell=1,\ldots,k-1$, and we will proceed to prove them for $k$. Note that the base case for $k$ holds, as \eqref{eq: moment bound induction} is naturally satisfied for $n=0$. We further assume that \eqref{eq: moment bound induction} hold for any $m=0,1,\ldots,n-1$, and we will prove them for $n$.
	
To prove the first statement in \eqref{eq: moment bound induction}, we apply the test function as follows into BAR \eqref{eq:BAR} 
\begin{equation} \label{eq: test function moment}
f_{k,n}\left( z \right)=\frac{r^{-2k}}{n+1}\left( r^k u_k^\T z\right)^{n+1},
\end{equation}
where $u_k$ is defined in \eqref{eq: u}. To shorten the notation, we will write $y_k$ for $r^k u_k^\T z$, and then $\nabla_z f_{k,n}(z) = r^{-k}y_k^nu_k$ and $\nabla^2_z f_{k,n}(z) = ny_k^{n-1}u_ku_k^\T$. According to Remark \ref{rmk: w} that $u_k\in \R_+^d$, we also have the relation 
\begin{equation} \label{eq: yk}
0\leq (r^k z_k)^n \leq y_k^n  \leq k^n   (r^k z_{\max})^n \leq k^n   (r^k z_k)^n +k^n  \sum_{j=1}^{k-1} (r^k w_{jk} z_j)^n ,
\end{equation}
where $z_{\max}\equiv \max(z_k, \max_{j<k} w_{jk} z_j )$.
Then, BAR \eqref{eq:BAR} implies
\begin{equation} \label{eq: BAR moment bound}
\frac{n}{2} u_k^\T\Gamma  u_k \mathbb{E}_{\pi}\left[ \left( Y^\uu_k \right)^{n-1}  \right] - \sum_{i=1}^{\Kdim} r^{i-k} u_k^\T R_{:,i} \mathbb{E}_{\pi}\left[  \left( Y^\uu_k \right)^{n}\right]    +   \sum_{i=1}^{\Kdim} r^{i-k}  u_k^\T R_{:,i}\mathbb{E}_{\nu_i}\left[   \left(  Y^\uu_k \right)^{n}\right]     =0.
\end{equation}
For any $i > k$, since the off-diagonal elements of $R$ are nonpositive and $u_k\in \R_+^d$, we have
\begin{equation} \label{eq: uk neg}
u_k^\T R_{:,i} = [w_{1k}, \ldots, w_{k-1,k}] R_{[1:k-1], i} +  R_{ki} = \sum_{j=1}^{k-1} w_{jk}R_{ji} +  R_{ki} \leq 0.
\end{equation}
Together with $u_k'R_{:,i}=0$ for $i<k$ by \eqref{eq: w}, hence, we have
\begin{align}
&\sum_{i=k}^{\Kdim} r^{i-k} u_k^\T R_{:,i}  \mathbb{E}_{\pi}\left[  \left(Y^\uu_k\right)^{n}\right] =  \frac{n}{2} u_k^\T\Gamma  u_k \mathbb{E}_{\pi}\left[ \left( Y^\uu_k \right)^{n-1}  \right]    +   \sum_{i=k}^{\Kdim} r^{i-k}  u_k^\T R_{:,i}\mathbb{E}_{\nu_i}\left[   \left(  Y^\uu_k \right)^{n}\right]  \notag \\
&\quad  \leq \frac{n}{2} u_k^\T\Gamma  u_k \mathbb{E}_{\pi}\left[ \left( Y^\uu_k \right)^{n-1}  \right]    +    u_k^\T R_{:,k}\mathbb{E}_{\nu_k}\left[   \left( Y_k^\uu\right)^{n}\right] \notag \\
&\quad  \leq \frac{n}{2} u_k^\T\Gamma  u_k k^{n-1} \mathbb{E}_{\pi}\left[   \left(r^k Z^\uu_k\right)^{n-1} +  \sum_{j=1}^{k-1} \left(r^k w_{jk} Z^\uu_j\right)^{n-1}   \right]    +    u_k^\T R_{:,k}k^n\mathbb{E}_{\nu_k}\left[  \sum_{j=1}^{k-1} \left(r^k w_{jk} Z^\uu_j\right)^{n}  \right] \notag \\
&\quad \leq \frac{n}{2} u_k^\T\Gamma  u_k k^{n-1}  \left( C_{k,n-1} +  \sum_{j=1}^{k-1} w_{jk}^{n-1} C_{j,n-1} \right)    +    u_k^\T R_{:,k}k^n \sum_{j=1}^{k-1} w_{jk}^{n} C_{k,j,n}, \label{eq: moment computation}
\end{align}
where the second inequality follows from the relation \eqref{eq: yk} and that $Z_k=0$ almost surely under the boundary measure $\nu_k$, and the last inequality holds due to the induction hypotheses.
According to Lemma \ref{lmm: eta}, for any $r\in (0,r_0)$, we have 
\begin{equation}
\mathbb{E}_{\pi}\left[  \left(r^{k} Z^\uu_k \right)^{n}\right]\leq \mathbb{E}_{\pi}\left[  \left(Y^\uu_k \right)^{n}\right] \leq  \frac{ n\Kdim u_k^\T\Gamma  u_k}{2 u_k^\T R_{:,k}}  k^{n-1}  \left( C_{k,n-1} +  \sum_{j=1}^{k-1} w_{jk}^{n-1} C_{j,n-1} \right)    + dk^n \sum_{j=1}^{k-1} w_{jk}^{n} C_{k,j,n}. \label{eq: moment bound res}
\end{equation}
Thus, the first statement of \eqref{eq: moment bound induction} holds for $k$ and $n$ with $C_{k,n}$ defined above. 

Since $Z_k=0$ almost surely under the boundary measure $\nu_k$, we can obtain that $r^{k-1}\E_{\nu_k}[ ( r^{k}Z_k^\uu )^n ]=0$ and hence, $C_{k,n,k}=0$.
To prove the second statement of \eqref{eq: moment bound induction} under other boundary measures, we apply the test function as follows into BAR \eqref{eq:BAR}
\begin{equation} \label{eq: test func nu}
f_{j,k,n}\left( z \right)=r^{-k-1}\left( r^k z_k \right)^n\exp\left( -r^k z_j \right)\qquad  j\in \{1,2,\ldots, \Kdim\}\setminus \{k\}.
\end{equation}
Substituting $f_{j,k,n}$ into BAR \eqref{eq:BAR}, we have
\begin{align}
&\frac{r^{k-1}}{2}\mathbb{E}_{\pi}\left[  \left( \Gamma_{kk}n(n-1) \left( r^k Z^\uu_k \right)^{n-2} - 2 \Gamma_{kj} n  \left( r^k Z^\uu_k \right)^{n-1} + \Gamma_{jj}  \left( r^k Z^\uu_k \right)^{n} \right) \exp\left( -r^k Z^\uu_j \right) \right]  \notag \\
&\quad  - \sum_{i=1}^{\Kdim} r^{i-1} \mathbb{E}_{\pi}\left[ \left( R_{ki} n  \left( r^k Z^\uu_k \right)^{n-1}  -R_{ji}\left( r^k Z^\uu_k \right)^n \right)\exp\left( -r^k Z^\uu_j \right)\right] \notag \\
&\quad  +  \sum_{i=1,i\neq k}^{\Kdim} r^{i-1}  \mathbb{E}_{\nu_{i}}\left[\left( R_{ki} n  \left( r^k Z^\uu_k \right)^{n-1}  -R_{ji}\left( r^k Z^\uu_k \right)^n \right)\exp\left( -r^k Z^\uu_j \right)\right]  =0 \label{eq: BAR moment nu}.
\end{align}
Hence, moving the expectation under the boundary measure $\nu_j$ to the one side and omitting the negative parts and the exponential decay on the other side, we have
\begin{align}
& r^{j-1} R_{jj} \E_{\nu_j}\left[ \left(r^k  Z^\uu_k\right)^n\right]   \leq \frac{r^{k-1}}{2}\mathbb{E}_{\pi}\left[   \Gamma_{kk}n(n-1) \left( r^k Z^\uu_k \right)^{n-2} + 2\abs{\Gamma_{kj}}n \left( r^k Z^\uu_k \right)^{n-1}   + \Gamma_{jj}  \left( r^k Z^\uu_k \right)^{n}    \right] \notag \\
&\quad  + \sum_{i=1,i\neq k}^{\Kdim} r^{i-1} (- R_{ki} )n  \mathbb{E}_{\pi}\left[  \left( r^k Z^\uu_k\right)^{n-1}   \right]  +   r^{j-1} R_{jj}  \mathbb{E}_{\pi}\left[ \left( r^k Z^\uu_k \right)^n  \right]    +  \sum_{i=1,i\neq k,j}^{\Kdim} r^{i-1}(-R_{ji})  \mathbb{E}_{\nu_{i}}\left[ \left( r^k Z^\uu_k \right)^n   \right]  \label{eq: moment computation nu}.
\end{align}
Moving the expectation under boundary measures on the same side and applying the induction hypothesis, we have, for any $r\in (0,r_0)$,
\begin{align} \label{eq: cj}
\sum_{i=1,i\neq k }^{\Kdim} r^{i-1}  R_{ji}  \mathbb{E}_{\nu_i}\left[ \left( r^k Z^\uu_k \right)^n   \right] 
&\leq \frac{\Gamma_{kk}n^2}{2}    C_{k,n-2} +\abs{\Gamma_{kj}}n C_{k,n-1}    + \frac{\Gamma_{jj}}{2}     C_{k,n}+  \sum_{i=1,i\neq k}^{\Kdim}   \abs{R_{ki}}n C_{k,n-1}  +    C_{k,n} .
\end{align}
Therefore, we have $d-1$ inequalities as follows:
\begin{equation*}
\sum_{i=1,i\neq k }^{\Kdim}  R_{ji}  r^{i-1}  \mathbb{E}_{\nu_{i}}\left[ \left( r^k Z^\uu_k \right)^n   \right] \leq c_j, \quad j\in \{1,2,\ldots, \Kdim\}\setminus \{k\},
\end{equation*}
where $c_j$ is defined in \eqref{eq: cj}, which a constant independent of $r$. 
If we denote the column vector $x$ by
$$ x = \left(    \mathbb{E}_{\nu_{1}}\left[ \left( r^k Z^\uu_k \right)^n   \right],  r   \mathbb{E}_{\nu_{2}}\left[ \left( r^k Z^\uu_k \right)^n   \right], \ldots,  r^{\Kdim-1}  \mathbb{E}_{\nu_{\Kdim}}\left[ \left( r^kZ^\uu_k \right)^n   \right]\right)$$ 
and the column vector $ c= (c_1,\ldots, c_\Kdim)$ with $c_k=0$, then the matrix form of the above inequalities is
\begin{equation} \label{eq: matrix form inequality}
R_{SS} x_S \le  c_S,
\end{equation}
where $S=\{1,2,3,\ldots, \Kdim\}\setminus \left\{ k \right\}$, $x_S$ and $c_S$ are, respectively, the subvectors of $x$ and $c$ with indices in $S$, and $R_{SS}$ is a submatrix of $R$ with indices in $S$. Since $R_{SS}$ is a principal submatrix of $R$, $R_{SS}$ is also an $\caM$-matrix. And hence, $R_{SS} x_S \le  c_S$ implies $ x_S \le R_{SS}^{-1} c_S$.
Thus, the second statement of \eqref{eq: moment bound induction} holds for any $i=1,\ldots,d$ with $C_{i,k,n}=(R_{SS}^{-1} c_S)_i$ for $i\in S$ and $C_{k,k,n}=0$.

\subsection{Two-dimensional \texorpdfstring{$\caP$}{P}-matrix Case}\label{subsubsec: 2d P matrix moment}
In this section, we will consider the two-dimensional $\caP$ reflection matrix in Corollary~\ref{prop: 2-d P matrix}. The properties of $\caP$-matrix demonstrate that $R_{11},R_{22}>0$ and $\det(R)=R_{11}R_{22}-R_{12}R_{21}>0$. 

To prove Condition \ref{assmpt: moment bound P} holds for two-dimensional $\caP$-matrix, it is also sufficient to prove that for any $k=1,2$ and any $n\in \Z_+$, there exist positive constants $C_{k,n}, C_{1, k,n}, C_{2, k,n}<\infty$ such that for all $r\in (0,r_0)$,
\begin{align}
	&\E_{\pi} \left[\left(r^k Z_k^\uu\right)^{n}\right] < C_{k,n}, \quad r^{i-1}\E_{\nu_i}\left[ \left( r^{k}Z_k^\uu \right)^n \right] < C_{i,k,n} , \quad  \forall i = 1,2,  \label{eq: moment bound 2d induction} 
\end{align}
where $r_0=\min\{ 1, R_{11}/(2\abs{R_{12}})\}\in (0,1]$ is a constant independent of $k$ and $n$, and the value follows from Lemma~\ref{lmm: eta} for $d=2$.
The first statement of \eqref{eq: moment bound P} is a direct corollary of the first one in \eqref{eq: moment bound 2d induction} for $n=d=2$. 
The second statement of \eqref{eq: moment bound P} can be proved by Hölder's inequality as follows: for any $r\in (0,r_0)$ and any $i,k\in\{1,2\}$,
\begin{equation*}
r\E_{\nu_i} \left[ \left( r^k Z_k^\uu \right)^d \right] \leq \left( r^i\E_{\nu_i} \left[ \left( r^k Z_k^\uu \right)^{di} \right] \right)^{1/i} \leq \left( r C_{i,k,di} \right)^{1/i} = o(1), \text{ as $r\to 0$.}
\end{equation*}

We first prove $k=1$, then $k=2$. For $k=1$, we use the mathematical induction to prove \eqref{eq: moment bound 2d induction} for $n\in \mathbb{Z}_+$.
As \eqref{eq: moment bound 2d induction} is naturally satisfied for $n=0$. We further assume that \eqref{eq: moment bound 2d induction} holds for any $m=0,1,\ldots,n-1$, and we will now prove it for $n$.

We use the test function $f\left(z \right) = r^{-2} (rz_1)^{n+1}/(n+1)$, which is the same as \eqref{eq: test function moment} for $k=1$. Then, BAR \eqref{eq:BAR} implies
\begin{align*} 
    & \frac{1}{2}  \Gamma_{11} \E_{\pi}\left[ n \left( rZ_1^\uu \right)^{n-1}  \right] -  R_{11} \E_{\pi}\left[ \left( rZ_1^\uu \right)^{n} \right] - r R_{12} \E_{\pi}\left[  \left( rZ_1^\uu \right)^{n}\right] \\
	&\quad + R_{11} \E_{\nu_1}\left[ \left( rZ_1^\uu \right)^{n} \right] + r R_{12} \E_{\nu_2}\left[  \left(rZ_1^\uu \right)^{n} \right] =0
\end{align*}
which implies
\begin{align} 
    \left( R_{11}  + R_{12}r \right) \E_{\pi}\left[  \left( rZ_1^\uu \right)^{n}\right]&= \frac{1}{2}  \Gamma_{11} \E_{\pi}\left[ n \left( rZ_1^\uu \right)^{n-1}  \right] + R_{12}r \E_{\nu_2}\left[ \left( rZ_1^\uu \right)^{n} \right] \notag\\
	& \leq \frac{1}{2}  \Gamma_{11} nC_{1,n-1} + R_{12}r \E_{\nu_2}\left[ \left( rZ_1^\uu \right)^{n} \right]  \label{eq:base3}
\end{align}
We can use another test function $f\left(z \right) = r^{-2} (rz_1)^n \exp(-arz_2)$, where $a$ is a positive constant and will be determined later, which is slightly different from \eqref{eq: test func nu} for $\caM$-matrix. Then, BAR \eqref{eq:BAR} implies
    \begin{align*}
        &\mathbb{E}_{\pi}\left[\frac{1}{2} \Linner{\Gamma, \begin{bmatrix}
            n(n-1) \left( rZ_1^\uu \right)^{n-2} & -an\left( rZ_1^\uu \right)^{n-1} \\ -an\left( rZ_1^\uu \right)^{n-1} & a^2 \left( rZ_1^\uu \right)^{n}
        \end{bmatrix}  } \exp\left( -arZ_2^\uu \right)  \right] \\
        &\quad  -  \E_{\pi}\left[ \left( R_{11} n\left( rZ_1^\uu \right)^{n-1}- R_{21} a\left( rZ_1^\uu \right)^{n} \right) \exp\left( -arZ_2^\uu \right) \right] \\
		&\quad- r \E_{\pi}\left[ \left( R_{12}n\left( rZ_1^\uu \right)^{n-1}-R_{22}a\left( rZ_1^\uu \right)^{n} \right) \exp\left( -arZ_2^\uu \right) \right]\\
        &\quad   + r \E_{\nu_2}\left[   R_{12}n\left( rZ_1^\uu \right)^{n-1}-R_{22}a\left( rZ_1^\uu \right)^{n}  \right] 
          =0,
    \end{align*}
    which implies $r\E_{\nu_2}[   ( rZ_1^\uu )^{n}  ]$ can be bounded by $\mathbb{E}_{\pi}[  ( rZ_1^\uu )^{n} ]$ and other uniformly bounded terms:
    \begin{align}
        r R_{22} \E_{\nu_2}\left[   \left( rZ_1^\uu \right)^{n}  \right] &\leq  \left( \frac{ \Gamma_{22}a}{2} + R_{21} + r R_{22} \right) \mathbb{E}_{\pi}\left[  \left( rZ_1^\uu \right)^{n}    \right]  +\frac{\Gamma_{11}n(n-1)C_{1,n-2}}{2a} \notag \\
		& +\frac{\abs{\Gamma_{12}}an C_{1,n-1}+|R_{12}|nC_{1,n-1}   +      |R_{12}| n C_{2,1,n-1}  }{a}\notag \\
		&\equiv  \left( \frac{ \Gamma_{22}a}{2} + R_{21} + r R_{22} \right) \mathbb{E}_{\pi}\left[  \left( rZ_1^\uu \right)^{n}    \right] + \Theta(a).  \label{eq:base4}
    \end{align}
    If $R_{12}\leq 0$, Lemma \ref{lmm: eta} implies $ R_{11}  + R_{12}r \geq R_{11}/2$, and \eqref{eq:base3} shows that $\E_{\pi}[ (rZ_1^\uu)^n ]$ is uniformly bounded, and hence, \eqref{eq:base4} implies that $r\E_{\nu_2}[ (rZ_1^\uu)^{n} ]$ is also uniformly bounded. Otherwise, combining \eqref{eq:base3} and \eqref{eq:base4}, we obtain 
    \begin{align*} 
        &R_{22}\left( R_{11}  + R_{12}r \right) \E_{\pi}\left[  \left( rZ_1^\uu \right)^{n}\right]\leq  \frac{1}{2}  R_{22}\Gamma_{11} nC_{1,n-1} + R_{12}\left( \frac{ \Gamma_{22}c}{2} + R_{21} + rR_{22} \right) \mathbb{E}_{\pi}\left[  \left( rZ_1^\uu \right)^{n}    \right]   +R_{12}\Theta(a).
    \end{align*}
    which implies  
    \begin{align*}
        \left( R_{11}R_{22}   - \frac{\Gamma_{22}R_{12}c}{2} -R_{12}R_{21} \right) \E_{\pi}\left[  \left( rZ_1^\uu \right)^{n}\right]&\leq   \frac{1}{2}  R_{22}\Gamma_{11} nC_{1,n-1}+\Theta(a).
    \end{align*}
    We can set $a = (R_{11}R_{22}-R_{12}R_{21})/(R_{12}\Gamma_{22})$, then the left-hand side will becomes
	\begin{equation*}
		\frac{1}{2} \left( R_{11}R_{22}  -R_{12}R_{21} \right) \E_{\pi}\left[  \left( rZ_1^\uu \right)^{n}\right],
	\end{equation*}
	which is uniformly bounded by a constant independent of $r$. Since $\det(R)= R_{11}R_{22}  -R_{12}R_{21} >0$, $\E_{\pi}[ ( rZ_1^\uu )^{n}]$ in \eqref{eq: moment bound 2d induction} is uniformly bounded, and hence, \eqref{eq:base4} implies that $r\E_{\nu_2}[ ( rZ_1^\uu )^{n}]$ in \eqref{eq: moment bound 2d induction} is also uniformly bounded.

For $k=2$, the vector $u_2$ in \eqref{eq: u} will be $(-R_{21}/R_{11}, 1)$ with the property $u_2'R^{(1)}=0$ and $u_2'R^{(2)}=\operatorname*{det}(R)/R_{11}>0$. We use the test function $f\left(z \right) = r^{-4} (r^2u^\T z)^{n+1}/(n+1) $, which is the same as \eqref{eq: test function moment} for $k=2$. Then, BAR \eqref{eq:BAR} implies
\begin{align*}
    &\mathbb{E}_{\pi}\left[\frac{n}{2} u_2^\T \Gamma u_2 \left( r^2 u_2^\T Z^\uu \right)^{n-1} \right]   -    \mathbb{E}_{\pi}\left[ \frac{\det(R)}{R_{11}}  \left( r^2 u_2^\T Z^\uu \right)^{n}\right]    +    \mathbb{E}_{{\nu}_{2}}\left[\frac{\det(R)}{R_{11}}\left( -r^2 \frac{R_{21}}{R_{11}} Z^\uu_1 \right)^{n}\right]  =0.
\end{align*}
Hence, \eqref{eq: moment bound 2d induction} for $k=1$ suggests that
\begin{align*}
    \mathbb{E}_{\pi}\left[  \left( r^2 u_2^\T Z^\uu \right)^{n}\right] &  =  \frac{n R_{11} u_2^\T \Gamma u_2}{2\det(R)} \mathbb{E}_{\pi}\left[  \left( r^2 u_2^\T Z^\uu \right)^{n-1} \right]    +   \left( - \frac{R_{21}}{R_{11}} \right)^n r^n \mathbb{E}_{{\nu}_{2}}\left[ \left( r  Z^\uu_1 \right)^{n}\right] \\
	& \leq \frac{n R_{11} u_2^\T \Gamma u_2}{2\det(R)}  \mathbb{E}_{\pi}\left[  \left( r^2 u_2^\T Z^\uu \right)^{n-1} \right]   +   \left| \frac{R_{21}}{R_{11}} \right|^n C_{2,1,n}.
\end{align*}
Iteratively, we have
\begin{align*}
    \mathbb{E}_{\pi}\left[  \left( r^2 u_2^\T Z^\uu \right)^{n}\right] &\leq \frac{n R_{11} u_2^\T \Gamma u_2}{2\det(R)}  \mathbb{E}_{\pi}\left[  \left( r^2 u_2^\T Z^\uu \right)^{n-1} \right]   +   \left| \frac{R_{21}}{R_{11}} \right|^n C_{2,1,n}\\
	& \leq \sum_{m=0}^n C_{2,1,n-m}\left| \frac{R_{21}}{R_{11}} \right|^{n-m}\left( \frac{ R_{11} u_2^\T \Gamma u_2}{2\det(R)}  \right)^{m} \frac{n!}{(n-m)!},
\end{align*}
which implies that $\mathbb{E}_{\pi}[  ( r^2 u_2^\T Z^\uu )^{n}]$ is uniform bounded for any $r\in (0,r_0)$.
Therefore, when $n$ is even, $\E_{\pi}[ (r^2 Z_2^\uu)^n ]$ is also uniformly bounded, because
\begin{align*}
	\E_{\pi}\left[ \left(r^2 Z_2^\uu\right)^n \right] &= \E_{\pi}\left[ \left( r^2 u_2^\T Z^\uu+\frac{R_{21}}{R_{11}}r^2 Z_1^\uu\right)^n \right] \leq 2^n \E_{\pi}\left[ \left| r^2 u_2^\T Z^\uu\right|^n  + \left|\frac{R_{21}}{R_{11}}r^2 Z_1^\uu\right|^n \right]\\
	& \leq 2^n \left( \sum_{m=0}^n C_{2,1,n-m}\left| \frac{R_{21}}{R_{11}} \right|^{n-m}\left( \frac{ R_{11} u_2^\T \Gamma u_2}{2\det(R)}  \right)^{m} \frac{n!}{(n-m)!} + \left|\frac{R_{21}}{R_{11}}\right|^nC_{1,n} \right).
\end{align*}
When $n$ is odd, we know $\E_{\pi}[ (r^2 Z_2^\uu)^{n+1} ]$ is uniformly bounded, and hence Hölder's inequality implies $\E_{\pi}[ (r^2 Z_2^\uu)^n ]$ is also uniformly bounded.

Since $Z_2=0$ almost surely under the boundary measure $\nu_2$, we can obtain that $r\E_{\nu_2}[ ( r^{2}Z_2^\uu )^n ]=0$ and hence, $C_{2,n,2}=0$. The proof for $\E_{\nu_1}[ (r^2 Z_2^\uu)^n ]$ is simpler than $k=1$. We use the test function $f\left(z \right) = r^{-3} (r^2 z_2)^{n} \exp(-r^2z_1)$, which is the same as \eqref{eq: test func nu}.  Hence, BAR \eqref{eq:BAR} becomes
\begin{align*}
    &\mathbb{E}_{\pi}\left[\frac{1}{2} \Linner{\Gamma, \begin{bmatrix}
        r \left( r^2Z_2^\uu \right)^{n}& -nr\left( r^2Z_2^\uu \right)^{n-1} \\ -nr\left( r^2Z_2^\uu \right)^{n-1} &  n(n-1) r \left( r^2Z_2^\uu \right)^{n-2}
    \end{bmatrix}  } \exp\left( -r^2Z_1^\uu \right)  \right] \\
    &\quad  -  \E_{\pi}\left[ \left(-R_{11}\left( r^2Z_2^\uu \right)^{n} + R_{21}n\left( r^2Z_2^\uu \right)^{n-1}  \right) \exp\left( -r^2Z_1^\uu \right) \right] \\
    &\quad- r \E_{\pi}\left[ \left( -R_{12}\left( r^2Z_2^\uu \right)^{n}+nR_{22}\left( r^2Z_2^\uu \right)^{n-1} \right) \exp\left( -r^2Z_1^\uu \right) \right]\\
    &\quad   + \E_{\nu_1}\left[  -R_{11}\left( r^2Z_2^\uu \right)^{n} + R_{21}n\left( r^2Z_2^\uu \right)^{n-1}     \right] 
        =0.
\end{align*}
Therefore, since $\E_{\pi}[ (r^2 Z_2^\uu)^{n} ]$ is uniformly bounded, we have 
\begin{align*}
	\E_{\nu_1}\left[ \left( r^2Z_2^\uu \right)^{n}   \right]  \leq  & \frac{|R_{21}|}{R_{11}}n\E_{\nu_1}\left[ \left( r^2Z_2^\uu \right)^{n-1}   \right] + \frac{
		\Gamma_{11}C_{2,n}+ 2\abs{\Gamma_{12}} nC_{2,n-1}+ \Gamma_{22} n(n-1) C_{2,n-2}   }{2R_{11}}  \\
    &\quad  +   \frac{R_{11}C_{2,n} + \abs{R_{21}}nC_{2,n-1} + \abs{R_{12}}C_{2,n}+nR_{22}C_{2,n-1}  }{R_{11}}    \\
	& \equiv \frac{|R_{21}|}{R_{11}}n\E_{\nu_1}\left[ \left( r^2Z_2^\uu \right)^{n-1}   \right] + \Theta(n).
\end{align*}
Iteratively, we have 
\begin{equation*}
	\E_{\nu_1}\left[ \left( r^2Z_2^\uu \right)^{n}   \right] \leq \sum_{m=0}^n \Theta(n-m) \left( \frac{|R_{21}|}{R_{11}} \right)^{m} \frac{n!}{(n-m)!}.
\end{equation*}
Hence, $\E_{\nu_1}[ (r^2 Z_2^\uu)^n ]$ is uniformly bounded, which completes the proof.

\subsection{Lower Triangular \texorpdfstring{$\caP$}{P}-matrix Case}\label{subsec: FBFS P matrix moment}

In this section, we will consider the lower triangular $\caP$ reflection matrix in Corollary \ref{prop: FBFS reentrant Queue}. To prove Condition \ref{assmpt: moment bound P} holds for lower triangular $\caP$-matrix, it is sufficient to prove that for any $k=1,\ldots, d$ and any $n\in \Z_+$, there exist positive constants $C_{k,n}, C_{1, k,n}, \ldots, C_{d, k,n}<\infty$ such that for all $r\in (0,r_0)$,
\begin{align}
	&\E_{\pi} \left[\left(r^k Z_k^\uu\right)^{n}\right] < C_{k,n}, \quad r^{i-1}\E_{\nu_i}\left[ \left( r^{k}Z_k^\uu \right)^n \right] < C_{i,k,n} , \quad  \forall\ i = \{ 1, \ldots, d\},  \label{eq: moment bound FBFS induction} 
\end{align}
where $r_0\in (0,1]$ is a constant independent of $k$ and $n$, and the value follows from Lemma~\ref{lmm: eta}. 
The first statement of \eqref{eq: moment bound P} is a direct corollary of the first one in \eqref{eq: moment bound FBFS induction} for $n=d$. 
The second statement of \eqref{eq: moment bound P} can be proved by Hölder's inequality as follows: for any $r\in (0,r_0)$ and any $i,k\in\{1,\ldots, d\}$,
\begin{equation*}
r\E_{\nu_i} \left[ \left( r^k Z_k^\uu \right)^d \right] \leq \left( r^i\E_{\nu_i} \left[ \left( r^k Z_k^\uu \right)^{di} \right] \right)^{1/i} \leq \left( r C_{i,k,di} \right)^{1/i} = o(1), \text{ as $r\to 0$.}
\end{equation*}

Our proof also utilizes the vectors $\{u_k\}_{k=1}^d$ defined in \eqref{eq: u}, but the lower triangular matrix can lead to more properties than $\caM$-matrix in \eqref{eq: w}, Lemma \ref{lmm: uk} and \eqref{eq: uk neg}:
\begin{equation} \label{eq: u triangular}
	u_k'R_{:,i}=0, \text{ for }i<k,~ u_k'R_{:,k}>0, \text{ and triangular $R$ implies } u_k'R_{:,i}= 0, \text{ for } i>k.
\end{equation}
Although $u_k$ may have negative entries, $u_k'R_{:,i}= 0$ for $i>k$ can make the following proof similar and even easier than the proof of $\caM$-matrix in Section \ref{subsec: uniform moment bound}.

Suppose that \eqref{eq: moment bound FBFS induction} holds for any $\ell=1,\ldots,k-1$, and we will proceed to prove it for $k$. To prove the first statement in \eqref{eq: moment bound FBFS induction}, we use the test function as follows, 
\begin{equation*} 
f_{k,n}\left( z \right)=\frac{r^{-2k}}{n+1}\left( r^k u_k^\T z\right)^{n+1},
\end{equation*}
which is the same as $\caM$-matrix cases in \eqref{eq: test function moment}. 
To shorten the notation, we will similarly write $y_k$ for $r^k u_k^\T z$, and then $\nabla_z f_{k,n}(z) = r^{-k}y_k^nu_k$ and $\nabla^2_z f_{k,n}(z) = ny_k^{n-1}u_ku_k^\T$. We also have the relation 
\begin{equation} \label{eq: yk 1}
	y_k^n  \leq k^n   (r^k z_k)^n +k^n  \sum_{j=1}^{k-1} (r^k {|w_{jk}|} z_j)^n .
\end{equation}
Then, BAR \eqref{eq:BAR} and \eqref{eq: u triangular} imply that
\begin{equation*} 
\frac{n}{2} u_k^\T\Gamma  u_k \mathbb{E}_{\pi}\left[ \left( Y^\uu_k \right)^{n-1}  \right] - {u_k^\T R_{:,k}} \mathbb{E}_{\pi}\left[  \left( Y^\uu_k \right)^{n}\right]    +   {u_k^\T R_{:,k}}\mathbb{E}_{\nu_k}\left[   \left(  Y^\uu_k \right)^{n}\right]     =0,
\end{equation*}
where we only has $u_k^\T R_{:,k}$ term, because ${u_k^\T R_{:,i}}=0$ for $i\neq k$.
Then \eqref{eq: yk 1} implies
\begin{align*}
	\mathbb{E}_{\pi}\left[  \left( Y^\uu_k \right)^{n}\right] &= \frac{nu_k^\T\Gamma  u_k }{2u_k^\T R_{:,k}}\mathbb{E}_{\pi}\left[ \left( Y^\uu_k \right)^{n-1}  \right] + \mathbb{E}_{\nu_k}\left[   \left(  Y^\uu_k \right)^{n}\right] \\
	&\leq \frac{nu_k^\T\Gamma  u_k }{2u_k^\T R_{:,k}}\mathbb{E}_{\pi}\left[ \left( Y^\uu_k \right)^{n-1}  \right] + \mathbb{E}_{\nu_k}\left[ k^n  \sum_{j=1}^{k-1} \left(r^k {|w_{jk}|} Z_j^\uu\right)^n\right]\\
	&\leq \frac{nu_k^\T\Gamma  u_k }{2u_k^\T R_{:,k}}\mathbb{E}_{\pi}\left[ \left( Y^\uu_k \right)^{n-1}  \right] + k^n  \sum_{j=1}^{k-1}|w_{jk}|^n  C_{k, j, n},
\end{align*}
where the first inequality follows from the relation \eqref{eq: yk 1} and that $Z_k=0$ almost surely under the boundary measure $\nu_k$, and the last inequality holds due to the induction hypotheses.
Iteratively, we have
\begin{align*}
	\mathbb{E}_{\pi}\left[  \left( Y^\uu_k \right)^{n}\right] &\leq  \sum_{m=0}^n \frac{n!}{(n-m)!} \left( \frac{ u_k^\T\Gamma  u_k }{2u_k^\T R_{:,k}}  \right)^{m}  k^{n-m}  \sum_{j=1}^{k-1}|w_{jk}|^{n-m}  C_{k, j, n-m}.
\end{align*}
Hence, $\mathbb{E}_{\pi}[  ( Y^\uu_k )^{n}]$ is uniformly bounded. Therefore, when $n$ is even, $\E_{\pi}[ (r^k Z_k^\uu)^n ]$ is also uniformly bounded, because
\begin{align*}
	\E_{\pi}\left[ \left(r^k Z_k^\uu\right)^n \right] &= \E_{\pi}\left[ \left( r^k u_k^\T Z^\uu-\sum_{j=1}^{k-1} r^k w_{jk} Z^\uu_j\right)^n \right]  \leq k^n \E_{\pi}\left[ \left( r^k u_k^\T Z^\uu\right)^n+\sum_{j=1}^{k-1} \left( r^k w_{jk} Z^\uu_j\right)^n \right]\\
	& \leq k^n \left( \sum_{m=0}^n \frac{n!}{(n-m)!} \left( \frac{ u_k^\T\Gamma  u_k }{2u_k^\T R_{:,k}}  \right)^{m}  k^{n-m}  \sum_{j=1}^{k-1}|w_{jk}|^{n-m}  C_{k, j, n-m} +\sum_{j=1}^{k-1} w_{jk}^n C_{j,n}  \right).
\end{align*}
When $n$ is odd, we know $\E_{\pi}[ (r^k Z_k^\uu)^{n+1} ]$ is uniformly bounded, and hence, Hölder's inequality implies $\E_{\pi}[ (r^k Z_k^\uu)^n ]$ is also uniformly bounded.

To prove the second statement of \eqref{eq: moment bound FBFS induction}, we need to further use mathematical induction for $n\in \mathbb{Z}_+$. We assume that the second statement of \eqref{eq: moment bound FBFS induction} holds for any $m=0,1,\ldots,n-1$, and we will proceed to prove it for $n$. We also apply the test function as follows
\begin{equation*}
f_{j,k,n}\left( z \right)=r^{-k-1}\left( r^k z_k \right)^n\exp\left( -r^k z_j \right)\qquad  j\in \{1,2,\ldots, \Kdim\}\setminus \{k\},
\end{equation*}
which is the same as $\caM$-matrix cases in  \eqref{eq: test func nu}.
Substituting $f_{j,k,n}$ into BAR \eqref{eq:BAR}, we have
\begin{align*}
&\frac{r^{k-1}}{2}\mathbb{E}_{\pi}\left[  \left( \Gamma_{kk}n(n-1) \left( r^k Z^\uu_k \right)^{n-2} - 2 \Gamma_{kj} n  \left( r^k Z^\uu_k \right)^{n-1} + \Gamma_{jj}  \left( r^k Z^\uu_k \right)^{n} \right) \exp\left( -r^k Z^\uu_j \right) \right]  \notag \\
&\quad  -  \mathbb{E}_{\pi}\left[ \left( {\sum_{i=1}^{k} r^{i-1}} R_{ki} n  \left( r^k Z^\uu_k \right)^{n-1}  - {\sum_{i=1}^{j} r^{i-1}} R_{ji}\left( r^k Z^\uu_k \right)^n \right)\exp\left( -r^k Z^\uu_j \right)\right] \notag \\
&\quad  +  {\sum_{i=1}^{k-1} r^{i-1}}  \mathbb{E}_{\nu_{i}}\left[ R_{ki} n  \left( r^k Z^\uu_k \right)^{n-1}   \exp\left( -r^k Z^\uu_j \right)\right] \notag\\
&\quad - {\sum_{i=1,i\neq k}^{j-1} r^{i-1}}  \mathbb{E}_{\nu_{i}}\left[R_{ji}\left( r^k Z^\uu_k \right)^n \exp\left( -r^k Z^\uu_j \right)\right] -  r^{j-1}  \mathbb{E}_{\nu_{j}}\left[R_{jj}\left( r^k Z^\uu_k \right)^n \right]  =0, 
\end{align*}
where we use the fact that $R_{\ell i}=0$ for $\ell < i$. Therefore, induction hypotheses imply that
\begin{align*}
	&R_{jj}r^{j-1}  \mathbb{E}_{\nu_{j}}\left[\left( r^k Z^\uu_k \right)^n \right] =\frac{1}{2}   \left( \Gamma_{kk}n(n-1) C_{k,n-2} + 2 \abs{\Gamma_{kj}} n  C_{k,n-1} + \Gamma_{jj}  C_{k,n} \right)     \\
	&\quad  +    \sum_{i=1}^{k}  \abs{R_{ki}} n C_{k,n-1}  + \sum_{i=1}^{j}  \abs{R_{ji}}C_{k,n}    +  \sum_{i=1}^{k-1}    \abs{R_{ki}} n  C_{i,k,n-1}  + \sum_{i=1,i\neq k}^{j-1} \abs{R_{ji}} r^{i-1} \mathbb{E}_{\nu_{i}}\left[\left( r^k Z^\uu_k \right)^n \right] , 
\end{align*}
Since $R_{jj} r^{j-1}  \mathbb{E}_{\nu_{j}}[( r^k Z^\uu_k )^n ]$ only depends on $r^{i-1} \mathbb{E}_{\nu_{i}}[( r^k Z^\uu_k )^n]$ for $i\in \{1,\ldots, j-1\}\setminus \{k\}$, we can iteratively obtain the uniform bounds for $r^{j-1} \mathbb{E}_{\nu_{j}}[( r^k Z^\uu_k )^n]$ from $j=1$ to $d$ skipping $k$.

\subsection{Truncated Test Functions} \label{subsec: truncate}
To make the proofs of Sections~\ref{subsec: uniform moment bound}-\ref{subsec: FBFS P matrix moment} more rigorous, we need to replace the unbounded test functions with their truncated counterparts in $C_b^2(\R_+^d)$. Initially, we will introduce the truncated test functions. Subsequently, we will demonstrate in \eqref{eq: truncation property} that these truncated test functions are "similar" to the original unbounded test functions in the sense that they also satisfy the desirable properties. This allows us to assert that the derivations in Sections \ref{subsec: uniform moment bound}-\ref{subsec: FBFS P matrix moment} remain valid even when applied to the truncated test functions. For illustrative purposes, we focus on the proof of Section \ref{subsec: uniform moment bound}, since the test functions in Sections \ref{subsubsec: 2d P matrix moment} and \ref{subsec: FBFS P matrix moment} are similar, and hence, follow a similar rationale.

\paragraph{Truncated test functions.} We employ a ``soft truncation" incorporating an exponentially decaying function. Specifically, the soft-truncated version of the polynomial functions $z^n$, indexed by $\alpha >0$, is given by
\begin{equation*}
	g^\fk_{n}\left( z \right)=z^ne^{- z/\alpha },
\end{equation*}
and hence, the corresponding antiderivative, derivative, and second derivative are given by
\begin{equation*}
	G^\fk_{n}\left( z \right)=\int_0^z x^ne^{- x/\alpha }dx, \quad \quad  \dot{g}_n^\fk(z)=nz^{n-1}e^{- z/\alpha }-z^ne^{- z/\alpha }/\alpha,
\end{equation*}
and
\begin{equation*}
	\ddot{g}_n^\fk(z) = n(n-1)z^{n-2}e^{- z/\alpha }-2nz^{n-1}e^{- z/\alpha }/\alpha+z^ne^{- z/\alpha }/\alpha^2.
\end{equation*}
Therefore, all the above functions are in $C_b^2(\R_+^d)$.
As $x\exp(-x)\leq 1$ and $x^2\exp(-x)\leq  1$, for all $x\geq 0$, we have the following property: for any $\alpha>0$ and $z\geq 0$,
\begin{equation} \label{eq: truncation property}
  0\leq {g}_n^\fk(z) \leq z^{n}, \quad \abs{\dot{g}_n^\fk(x)} \leq (n+1)x^{n-1}, \quad \abs{\ddot{g}_n^\fk(x)} \leq (n+1)^2x^{n-2}.
\end{equation}
In summary, the truncated test functions, corresponding to \eqref{eq: test function moment} and \eqref{eq: test func nu}, are as follows:
\begin{align}
	f^\fk_{k,n}\left( z \right)&=r^{-2k} G^\fk_n\left(y_k \right) = r^{-2k} G^\fk_n\left( r^{k} u_k^\T z \right) \label{eq: test func truncation},\\
	f^\fk_{j,k,n}\left( z \right)&=r^{-k-1}g^\fk_n\left( r^{k} z_k \right)\exp\left( -r^k z_j \right)\qquad  j\in \{1,2,\ldots, \Kdim\}\setminus \{k\}. \label{eq: test func nu truncation}
\end{align}

\paragraph{Applying truncated test functions to BAR.} We next describe the necessary adjustments in the proof of Section \ref{subsec: uniform moment bound} when replacing unbounded test functions with truncated versions. A comparative analysis is provided below.

To prove the first statement in \eqref{eq: moment bound induction}, we apply the test function in \eqref{eq: test func truncation} to BAR \eqref{eq:BAR} and obtain
\begin{equation*}
	\frac{n}{2} u_k^\T\Gamma  u_k \mathbb{E}_{\pi}\left[\dot{g}_n^\fk \left( Y^\uu_k \right)  \right] - \sum_{i=1}^{\Kdim} r^{i-k} u_k^\T R_{:,i} \mathbb{E}_{\pi}\left[  {g}_n^\fk\left( Y^\uu_k \right)\right]    +   \sum_{i=1}^{\Kdim} r^{i-k}  u_k^\T R_{:,i}\mathbb{E}_{\nu_i}\left[  {g}_n^\fk \left(  Y^\uu_k \right)\right]     =0,
\end{equation*}
which is analogous to \eqref{eq: BAR moment bound}. Using the similar computation as in \eqref{eq: moment computation} and \eqref{eq: moment bound res}, we can prove that $\mathbb{E}_{\pi}[  g_n^\fk (Y^\uu_k) ] $ is uniformly bounded for any $r\in(0,r_0)$ and any $\alpha>0$. Finally, as $g^\fk_n\left( z \right) \uparrow z^n$ as $\alpha\to \infty$, the monotone convergence theorem can be applied to obtain the same result in \eqref{eq: moment bound res}, which completes the proof of the first statement in \eqref{eq: moment bound induction}.

To prove the second statement in \eqref{eq: moment bound induction}, we apply the test function in \eqref{eq: test func nu truncation} to BAR \eqref{eq:BAR} and obtain
\begin{align*}
  &\frac{r^{k-1}}{2}\mathbb{E}_{\pi}\left[  \left( \Gamma_{kk} \ddot{g}_n^\fk\left( r^k Z^\uu_k \right)  - 2 \Gamma_{kj} \dot{g}_n^\fk\left( r^k Z^\uu_k \right) + \Gamma_{jj}  {g}_n^\fk\left( r^k Z^\uu_k \right) \right) \exp\left( -r^k Z^\uu_j \right) \right]  \\
  &\quad  - \sum_{i=1}^{\Kdim} r^{i-1} \mathbb{E}_{\pi}\left[ \left( R_{ki} \dot{g}_n^\fk\left( r^k Z^\uu_k \right)  -R_{ji}{g}_n^\fk\left( r^k Z^\uu_k \right) \right)\exp\left( -r^k Z^\uu_j \right)\right]  \\
  &\quad  +  \sum_{i=1,i\neq k}^{\Kdim} r^{i-1}  \mathbb{E}_{\nu_{i}}\left[\left( R_{ki} \dot{g}_n^\fk \left( r^k Z^\uu_k \right)  -R_{ji}{g}_n^\fk\left( r^k Z^\uu_k \right) \right)\exp\left( -r^k Z^\uu_j \right)\right]  =0,
\end{align*}
which is analogous to \eqref{eq: BAR moment nu}. Using the similar computation as in \eqref{eq: moment computation nu} and \eqref{eq: cj}, we can similarly obtain $\sum_{i=1,i\neq k }^{\Kdim}  R_{ji}  r^{i-1}  \mathbb{E}_{\nu_{i}}[ g_n^\fk  ( r^k Z^\uu_k )   ] $ is uniformly bounded for any $r\in(0,r_0)$, any $\alpha>0$ and any $j\in \{1,2,\ldots, \Kdim\}\setminus \{k\}$, and hence, following the same argument as \eqref{eq: matrix form inequality}, we can prove that $\mathbb{E}_{\nu_i}[ g_n^\fk ( r^k Z^\uu_k )  ]$ is uniformly bounded for any $r\in(0,r_0)$ and any $\kappa>0$. Finally, as $g^\fk_n\left( z \right) \uparrow z^n$ as $\alpha\to \infty$, the monotone convergence theorem can be applied to obtain the same result, which completes the proof of the second statement in \eqref{eq: moment bound induction}.

\bibliographystyle{plainnat} 
\bibliography{dai20230220_v2} 

\newpage
\appendix

\section{Proof of Remark and Technical Lemmas}\label{subsec: proof lemmas}

\begin{proof}[Proof of Remark \ref{rmk: skew}]
	Suppose $\Gamma_{kk}=1$ and $R_{kk}=1$ for all $k=1,2,\ldots,d$.  If the family of SRBMs in our setting satisfies the skew symmetric condition in \eqref{eq: skew symmetric}, then Theorem \ref{thm: matrix P} indicates that the limit of $(\delta^\uu_1 Z_1^\uu, \delta^\uu_2 Z_2^\uu, \ldots, \delta^\uu_d Z_d^\uu)$ has a product-form exponential stationary distribution with mean $m_k = u_k^\T\Gamma u_k / (2u_k^\T R u_k) = 1/2$ for $k$th component, where the equalities follow from \eqref{eq: w} and \eqref{eq: skew symmetric}. Hence, the approximation of the stationary distribution of the SRBM $Z^\uu$ is a product-form exponential distribution with mean $1/(2\delta_k^\uu)$ for $k$th component, which recovers the result in skew symmetric case.

	When $\Gamma_{kk}\neq 1$ or $R_{kk}\neq 1$ for some $k=1,2,\ldots,d$, we can use the standardization idea in \citet{DaiHarr1992} to conclude the same result. Proposition 8 of \citet{DaiHarr1992} shows that the standardized SRBM $Z^\#$ with parameters $\Gamma^\#=\Lambda \Gamma \Lambda$, $R^\#=\Lambda R V \Lambda^{-1}$ and $\mu^\# = \Lambda \mu$ has the same distribution as $\Lambda Z$ and the skew symmetric condition \eqref{eq: skew symmetric} is $2\Gamma^\# = R^\# + (R^{\#})^{\T}$. Since Theorem \ref{thm: matrix P} recovers the result in skew symmetric case for the standardized SRBM $Z^\#$, it also holds for the original SRBM $Z$.
\end{proof}

\begin{proof}[Proof of Lemma \ref{lmm: uk}]
	We first introduce the following notation. 
	For $A, B \subseteq \{1,\ldots,d\}$, $R_{A, B}$ denotes the submatrix of $R$ whose entries are $\left(R_{jk}\right)$ for $j \in A$ and $k \in B$, adhering to the conventions below: for $j \leq k$,
	$$
	[j: k]=\{j, j+1, \ldots, k\}, \quad R_{A, k}=R_{A,\{k\}}, \quad R_{k, B}=R_{\{k\}, B}, \quad R_k=R_{[1: k],[1: k]}.
	$$
	Then \eqref{eq: w} be represented in matrix form as
	\begin{equation*}
		[w_{1k}, \ldots, w_{k-1,k}]R_{k-1}+ R_{k,[1:k-1]}=0,
	\end{equation*} 
	and hence,
	\begin{align*} 
	u_k^\T R_{:,k} = [w_{1k}, \ldots, w_{k-1,k}] R_{[1:k-1], k} + R_{kk} = R_{kk} - R_{k,[1:k-1]}^\T R^{-1}_{k-1} R_{[1:k-1], k} 
	\end{align*}
	which is the Schur complement of $R_k$ with respect to $R_{k-1}$, and hence, we have $u_k^\T R_{:,k}=\det(R_k)/\det(R_{k-1})$ \citep{zhan2006}. Since $R$ is an $\caP$-matrix, we have $\operatorname{det}\left( R_{j} \right)>0$ for $j=1,\ldots,d$, and hence, $u_k^\T R_{:,k}>0$.
\end{proof}

The proofs of Lemmas \ref{lmm:SSCs}, \ref{lmm:SSCs P} and \ref{lmm:extra} depend on the uniform moment bounds in \eqref{eq: moment bound} or the boundary measure form \eqref{eq: moment bound P}.
Since Lemma \ref{lmm:SSCs P} can degenerate to Lemma \ref{lmm:SSCs} when $R$ is an $\caM$-matrix, we will only provide the proof of Lemma \ref{lmm:SSCs P} in this section.

\begin{proof}[Proof of Lemma \ref{lmm:SSCs P}]
	To shorten the notation, we define the vector $\beta^\uu$ as follows:
	$$\beta^\uu=\theta^\uu_k-(0,...,0,r^k\eta_k,  r^{k+1}\eta_{k+1},...,r^\Kdim \eta_\Kdim)^\T.$$
	Then following the definition of $\theta^\uu_k$ in \eqref{eq: theta}, we can analyze the elements of $\beta^\uu$ as follows:
	\begin{equation*}
		\beta_j^\uu = \begin{cases}
			O(r^k) ,&\text{ for }j\leq k-1, \\
			O(r^{j+1}),&\text{ for }j\geq k.
		\end{cases}
	\end{equation*}

	Condition \ref{assmpt: moment bound P} implies that ${r^{j+1/2}} Z_j^\uu \overset{p}{\to} 0$ for any $j=1,\ldots,d$ and for all stationary distributions $\{\pi, \nu_1, \ldots, \nu_d\}$, because Markov's inequality implies that for any $\epsilon>0$,
	\begin{equation*}
		\lim_{r\downarrow 0} \mathbb{P}\left( \abs{r^{j+1/2} Z_j^\uu} > \epsilon \right) \leq \lim_{r\downarrow 0} \frac{\E\left[ \abs{r^{j+1/2} Z_j^\uu}^2  \right]}{\epsilon^2} = \lim_{r\downarrow 0} \frac{r\E\left[ \left( r^{j} Z_j^\uu \right)^2  \right]}{\epsilon^2} = 0.
	\end{equation*}
	Therefore, we have $\theta^{(r)}_{k,j} Z_j^\uu=\beta_j^\uu Z_j^\uu  \overset{p}{\to} 0$ for $j<k$, where $\theta^{(r)}_{k,j}$ denotes the $j$th element of the vector $\theta^{(r)}_{k}$. For $j\geq k$, 
	\begin{align*}
		\kappa\left( \theta^{(r)}_{k,j} Z_j^\uu \right)  - \eta_jr^j Z_j^\uu&=\kappa\left(\eta_jr^j Z_j^\uu+ \beta^{(r)}_j Z_j^\uu \right)  - \kappa \left( \eta_jr^j Z_j^\uu \right) \\
		&= \beta^{(r)}_j Z_j^\uu \dot\kappa\left(\eta_jr^j Z_j^\uu+ \xi \beta^{(r)}_j Z_j^\uu \right) \overset{p}{\to} 0,
	\end{align*}
	where $\xi$ is a random variable taking value in $(0,1)$ by the mean value theorem, and the convergence in probability follows from $\beta^{(r)}_j=O(r^{j+1})$ and $\dot \kappa$ is bounded. Hence, the bounded convergence theorem and continuous mapping theorem imply that \eqref{eq: SSC A P} becomes
    \begin{align*}
        &\lim_{r\downarrow 0}\abs{\psi^\uu(\theta^{(r)}_k)-\phi^\uu(0,...,0,r^k\eta_k,r^{k+1}\eta_{k+1},  ...,r^\Kdim \eta_\Kdim)} \\
        &\quad \leq \lim_{r\downarrow 0}\abs{\E_{\pi}\left[ e^{\sum_{j<k}\kappa\left( \theta^{(r)}_{k,j} Z_j^\uu \right) + \sum_{j\geq k} \left( \kappa\left( \theta^{(r)}_{k,j} Z_j^\uu \right)  - \eta_jr^j Z_j^\uu \right) }\right] -1} \\
		&\quad =\abs{\E_{\pi}\left[ e^{\sum_{j<k}\lim\limits_{r\downarrow 0}\kappa\left( \theta^{(r)}_{k,j} Z_j^\uu \right) + \sum_{j\geq k} \lim\limits_{r\downarrow 0}\left( \kappa\left( \theta^{(r)}_{k,j} Z_j^\uu \right)  - \eta_jr^j Z_j^\uu \right) }\right] -1}  =0.
    \end{align*}
	Similarly, since $Z_i=0$ almost surely if $Z$ follows the probability measure $\nu_i$, the bounded convergence theorem and continuous mapping theorem imply that \eqref{eq: SSC B P} becomes
	\begin{align*}
	    &\lim_{r\downarrow 0}\abs{\psi^\uu_k(\theta^{(r)}_k)-\phi^\uu_k(0,...,0,0,r^{k+1}\eta_{k+1},  ...,r^\Kdim \eta_\Kdim)} \\
	    &\quad \leq \lim_{r\downarrow 0}\abs{\E_{\nu_i}\left[ e^{\sum_{j<k}\kappa\left( \theta^{(r)}_{k,j} Z_j^\uu \right) + \sum_{j> k} \left( \kappa\left( \theta^{(r)}_{k,j} Z_j^\uu \right)  - \eta_jr^j Z_j^\uu \right) }\right] -1} \\
		&\quad = \abs{\E_{\nu_i}\left[ e^{\sum_{j<k}\lim\limits_{r\downarrow 0}\kappa\left( \theta^{(r)}_{k,j} Z_j^\uu \right) + \sum_{j> k} \lim\limits_{r\downarrow 0}\left( \kappa\left( \theta^{(r)}_{k,j} Z_j^\uu \right)  - \eta_jr^j Z_j^\uu \right) }\right] -1}  =0.
	\end{align*}
	The proofs of \eqref{eq: SSC C P} and  \eqref{eq: SSC D P} follow from a similar argument, so we omit the detail.
\end{proof}

\begin{proof}[Proof of Lemma \ref{lmm:extra}]
	To prove $\gamma^\uu(\theta^{(r)}_k) = o(r^{2k})$, it suffices to show that for any $i,j\in \{1,...,d\}$,
	\begin{align}
		&\E_{\pi}\left[  \epsilon_{g,\theta^\uu_k,j}\left( Z^\uu \right)  \tilde{f}_{\theta^{(r)}_k}\left( Z^\uu \right)\right] = o(r^{2k-1}), \label{eq:extra1-1}\\
		&\E_{\nu_i}\left[  \epsilon_{g,\theta^\uu_k,j}\left( Z^\uu \right)  \tilde{f}_{\theta^{(r)}_k}\left( Z^\uu \right)\right] = o(r^{2k-1}),\label{eq:extra1-2}\\
		&\E_{\pi}\left[\epsilon_{H,\theta^\uu_k,ij}\left( Z^\uu \right)\tilde{f}_{\theta^{(r)}_k}\left( Z^\uu \right)\right]=o(r^{2k}),\label{eq:extra1-3}
	\end{align}
	where the extra terms $\epsilon_{g,\theta}$ and $\epsilon_{H,\theta}$ is defined in truncated MGF-BAR \eqref{eq:TMGF-BAR}.

	By the definition of $\theta^\uu_k$ in \eqref{eq: theta}, we have $\theta^{(r)}_{k,j} = r^j\eta_j + \sum_{\ell=j+1}^d r^\ell\eta_\ell w_{j\ell}$ for $j\geq k$. Since $\eta_{k,j} < 0$, we have $\theta^{(r)}_j < 0$ for $r$ small enough, and then \eqref{eq: eps0} implies that \eqref{eq:extra1-1}-\eqref{eq:extra1-3} hold for any $i,j\geq k$.

	For $j<k$, the definition of $\theta_k^\uu$ in \eqref{eq: theta} shows that $\theta_{k,j}^\uu = O(r^k)$, and hence, we have
	\begin{align}
		\E_{\pi}\left[  \I{\theta^\uu_{k,j} Z^\uu_j> 1}  \right] &\leq   \E_{\pi}\left[  \I{\abs{\theta^\uu_{k,j} Z^\uu_j}> 1}  \right]  \leq  \E_{\pi}\left[  \abs{\theta^\uu_{k,j} Z^\uu_j}^d  \right]\notag \\
		& \leq   \left( {\abs{\theta^\uu_{k,j}}}/{r^{j}} \right)^d \E_{\pi}\left[ \left(r^j Z^\uu_j\right)^d  \right]  =  O(r^{(k-j)d}) O(1) =O(r^{d}), \label{eq: eps}
	\end{align}
	where the second inequality holds due to Markov's inequality. Therefore, \eqref{eq:extra1-1} becomes
	\begin{align*}
		&\abs{\E_{\pi}\left[  \epsilon_{g,\theta^\uu_k,j}\left( Z^\uu \right)  \tilde{f}_{\theta^{(r)}_k}\left( Z^\uu \right)\right]} =  \abs{\E_{\pi}\left[ \theta^\uu_{k,j} \left( \dot{\kappa}\left(\theta^\uu_{k,j} Z^\uu_j\right) - 1\right)  \tilde{f}_{\theta^{(r)}_k}\left( Z^\uu \right)\right]}\\
		&\quad \leq \E_{\pi}\left[ \abs{\theta^\uu_{k,j}} \abs{ \dot{\kappa}\left(\theta^\uu_{k,j} Z^\uu_j\right) - 1} \abs{ \tilde{f}_{\theta^{(r)}_k}\left( Z^\uu \right)}\right] \leq \abs{\theta^\uu_{k,j}} \E_{\pi}\left[  \I{\theta^\uu_{k,j} Z^\uu_j> 1}  \right]e^{2d}\\
		&\quad = O(r^{k}) O(r^d) O(1) =O(r^{k+d})= o(r^{2k-1}),
	\end{align*}
	where the second inequality follows from \eqref{eq:kappa properties}. Similarly, for $i=1,\ldots,d$, \eqref{eq:extra1-2} becomes
	\begin{align*}
		\abs{\E_{\nu_i}\left[  \epsilon_{g,\theta^\uu_k,j}\left( Z^\uu \right)  \tilde{f}_{\theta^{(r)}_k}\left( Z^\uu \right)\right] }&\leq \abs{\theta^\uu_{k,j}} \left( {\abs{\theta^\uu_{k,j}}}/{r^{j}} \right)^d \E_{\nu_i}\left[ \left(r^j Z^\uu_j\right)^d  \right]e^{2d}\\
		&= O(r^{k}) O(r^{(k-j)d}) o(1/r)O(1) = o(r^{2k-1}).
	\end{align*}
	For $i<k$ and $j<k$, the definition of $\theta_k^\uu$ in \eqref{eq: theta} shows that $\theta_{k,i} = O(r^k)$ and $\theta_{k,j} = O(r^k)$, and hence, \eqref{eq:extra1-3} becomes
	\begin{align}
		&\abs{\E_{\pi}\left[\epsilon_{H,\theta^\uu_k,ij}\left( Z^\uu \right)\tilde{f}_{\theta^{(r)}_k}\left( Z^\uu \right)\right]}\notag\\
		&  = \abs{\E_{\pi}\left[\left( \theta_{k,i}^\uu \theta_{k,j}^\uu \left(\dot{\kappa}\left(\theta_{k,i}^\uu Z^\uu_i\right)\dot{\kappa}\left(\theta^\uu_{k,j} Z^\uu_j\right) - 1\right)+\left(\theta_{k,i}^\uu\right)^2\ddot{\kappa}\left(\theta_{k,i}^\uu Z^\uu_i\right)\I{i=j} \right)\tilde{f}_{\theta^{(r)}_k}\left( Z^\uu \right)\right]}\notag\\
		&  \leq  \abs{\theta_{k,i}^\uu}  \abs{\theta_{k,j}^\uu}  \E_{\pi}\left[\abs{\dot{\kappa}\left(\theta_{k,i}^\uu Z^\uu_i\right)\dot{\kappa}\left(\theta^\uu_{k,j} Z^\uu_j\right) - 1}\right]e^{2d}+ \abs{\theta_i^\uu}^2 \E_{\pi}\left[\abs{\ddot{\kappa}\left(\theta_{k,i}^\uu Z^\uu_i\right)} \right] e^{2d}, \label{eq: epsH}
	\end{align}
	where the inequality follows from \eqref{eq:kappa properties}. The first term in \eqref{eq: epsH} becomes
	\begin{align}
		&\E_{\pi}\left[\abs{\dot{\kappa}\left(\theta_{k,i}^\uu Z^\uu_i\right)\dot{\kappa}\left(\theta^\uu_{k,j} Z^\uu_j\right) - 1}\right]\notag \\
		& \quad = \E_{\pi}\left[\abs{\dot{\kappa}\left(\theta_{k,i}^\uu Z^\uu_i\right)\left( \dot{\kappa}\left(\theta^\uu_{k,j} Z^\uu_j\right) -  1 \right) + \dot{\kappa}\left(\theta_{k,i}^\uu Z^\uu_i\right) - 1}\right]\notag\\
		&\quad \leq \E_{\pi}\left[\abs{\dot{\kappa}\left(\theta_{k,i}^\uu Z^\uu_i\right)}\abs{ \dot{\kappa}\left(\theta^\uu_{k,j} Z^\uu_j\right) -  1 } + \abs{\dot{\kappa}\left(\theta_{k,i}^\uu Z^\uu_i\right) - 1}\right]\notag\\
		&\quad \leq 2 \E_{\pi}\left[ \I{\theta^\uu_{k,j} Z^\uu_j> 1}  \right]+\E_{\pi}\left[  \I{\theta^\uu_{k,i} Z^\uu_i> 1} \right] = O(r^{d}), \label{eq: epsH1}
	\end{align}
	where the second inequality follows from \eqref{eq:kappa properties} and the last equality holds due to \eqref{eq: eps}.
	The second term in \eqref{eq: epsH} becomes
	\begin{align}
		&\E_{\pi}\left[\abs{\ddot{\kappa}\left(\theta_{k,i}^\uu Z^\uu_i\right)} \right] \leq \E_{\pi}\left[4\I{1< \theta_{k,i}^\uu Z^\uu_i< 2} \right] \leq 4\E_{\pi}\left[ \I{\theta^\uu_{k,j} Z^\uu_j> 1}  \right] = O(r^{d}), \label{eq: epsH2}
	\end{align}
	where the first inequality follows from \eqref{eq:kappa properties} and the last equality holds due to \eqref{eq: eps}. Combining \eqref{eq: epsH1} and \eqref{eq: epsH2}, \eqref{eq: epsH} becomes
	\begin{equation*}
		\abs{\E_{\pi}\left[\epsilon_{H,\theta^\uu_k,ij}\left( Z^\uu \right)\tilde{f}_{\theta^{(r)}_k}\left( Z^\uu \right)\right]} = O(r^k) O(r^k) O(r^d) O(1) + O(r^{2k}) O(r^d) O(1) = o(r^{2k}).
	\end{equation*}
	For $i,j$ with $i< k \leq j$ ($j<k\leq i$ is similar), we have $\theta_{k,i} = O(r^k)$ and $\theta_{k,j}\leq 0 $ for $r$ small enough, and hence,
	\begin{align*}
		&\abs{\E_{\pi}\left[\epsilon_{H,\theta^\uu_k,ij}\left( Z^\uu \right)\tilde{f}_{\theta^{(r)}_k}\left( Z^\uu \right)\right]} \\
		&  = \abs{\E_{\pi}\left[\left( \theta_{k,i}^\uu \theta_{k,j}^\uu \left(\dot{\kappa}\left(\theta_{k,i}^\uu Z^\uu_i\right)\dot{\kappa}\left(\theta^\uu_{k,j} Z^\uu_j\right) - 1\right)+\left(\theta_{k,i}^\uu\right)^2\ddot{\kappa}\left(\theta_{k,i}^\uu Z^\uu_i\right)\I{i=j} \right)\tilde{f}_{\theta^{(r)}_k}\left( Z^\uu \right)\right]} \\
		&  = \abs{\E_{\pi}\left[ \theta_{k,i}^\uu \theta_{k,j}^\uu \left(\dot{\kappa}\left(\theta_{k,i}^\uu Z^\uu_i\right) - 1\right) \tilde{f}_{\theta^{(r)}_k}\left( Z^\uu \right)\right]}\leq \abs{\theta_{k,i}^\uu \theta_{k,j}^\uu} \E_{\pi}\left[ \abs{  \dot{\kappa}\left(\theta_{k,i}^\uu Z^\uu_i\right) - 1 }  \abs{ \tilde{f}_{\theta^{(r)}_k}\left( Z^\uu \right)}\right]\\
		& \leq  \abs{\theta_{k,i}^\uu \theta_{k,j}^\uu} \E_{\pi}\left[  \I{\abs{\theta_{k,i}^\uu Z_i^\uu}>1} \right] e^{2d} = O(r^k) O(r^{j}) O(r^d) O(1) = o(r^{2k}),
	\end{align*}
	where the first inequality follows from \eqref{eq:kappa properties} and the third equality holds due to \eqref{eq: eps}. 

	The proof of $\gamma^\uu(\tilde{\theta}^{(r)}_k) = o(r^{2k+1/2})$ is similar to the proof of $\gamma^\uu(\theta^{(r)}_k) = o(r^{2k})$, and hence, we omit the detail.
\end{proof}

\end{document}